\documentclass[11pt,reqno]{amsart}

\usepackage[T1]{fontenc}
\usepackage[utf8]{inputenc}
\usepackage{amsmath,amssymb,amsfonts,amsthm,mathtools,bbm,mathrsfs}
\usepackage{array,booktabs,tabularx}
\usepackage{graphicx}
\usepackage{xcolor}
\usepackage{microtype}
\usepackage[hidelinks]{hyperref}
\allowdisplaybreaks
\numberwithin{equation}{section}

\newtheorem{theorem}{Theorem}[section]
\newtheorem{corollary}[theorem]{Corollary}
\newtheorem{proposition}[theorem]{Proposition}

\theoremstyle{definition}
\newtheorem{definition}[theorem]{Definition}
\newtheorem{example}[theorem]{Example}
\theoremstyle{remark}

\newcommand\mN{{\mathbb N}}
\newcommand\sA{\mathsf{A}}
\newcommand\bA{{\mathscr A}}
\newcommand\cE{{\mathcal E}}

\newcommand\bF{{\mathbf F}}
\newcommand\mI[1]{\mathbbm{1}_{#1}}
\newcommand\md{\,\mathrm d}
\newcommand\Ch{\operatorname{Ch}}

\newcommand\id{\operatorname{id}}

\renewcommand\le{\leqslant}
\renewcommand\ge{\geqslant}

\title[Conditional Aggregation in Context-Aware Fuzzy Systems]{Conditional Aggregation in Context-Aware Fuzzy Systems: A Fibered Representation Theory}

\author{Ondrej Hutn\'{i}k}
\thanks{This work has been submitted to the IEEE for possible publication. Copyright may be transferred without notice, after which this version may no longer be accessible.}
\address{Institute of Mathematics, Faculty of Science, Pavol Jozef \v{S}af\'{a}rik University in Ko\v{s}ice, Jesenn\'{a} 5, 040 01 Ko\v{s}ice, Slovakia}
\email{ondrej.hutnik@upjs.sk}

\author{Nat\'{a}lia Pu\v{s}k\'{a}rov\'{a}}
\address{Institute of Mathematics, Faculty of Science, Pavol Jozef \v{S}af\'{a}rik University in Ko\v{s}ice, Jesenn\'{a} 5, 040 01 Ko\v{s}ice, Slovakia}

\keywords{conditional aggregation operators, context-aware fuzzy systems, information fusion, capacity extension, universal integrals}
\hypersetup{
 pdftitle={Conditional Aggregation in Context-Aware Fuzzy Systems: A Fibered Representation Theory},
 pdfauthor={Ondrej Hutnik and Natalia Puskarova}
}

\begin{document}
\begin{abstract}
Conditional aggregation operators evaluate profiles relative to admissible contexts whose active sources, criteria, or rules may vary. For a fixed context, such an operator is indeed an ordinary monotone functional on the restricted profile. The nontrivial structure appears at the family level, where local functionals on different profile spaces must be compared and possibly represented by one global rule. We develop a fibered representation theory for families of conditional aggregation operators. After separating fiberwise properties from cross-context compatibility, we characterize global representability without assuming that the full universe is itself an admissible context. Such a family is generated by a monotone global functional if and only if its local values respect the pointwise order after canonical zero extension; when this holds, explicit least and greatest global generators are available. This yields a hierarchy whose inclusions can be strict between arbitrary, zero-extension-consistent, and globally representable families. For finite systems, we then establish an extremal extension theorem for partial capacities and derive an exact overlap criterion for gluing local capacities into global ones. Finally, calibrated representation-preserving integral schemes unify the Choquet, Sugeno, and Shilkret cases: zero-extension consistency is equivalent to projective compatibility of local set functions, while overlap-compatible local capacities yield sharp lower and upper fuzzy-integral scores. Evidence-fusion and multi-criteria applications interpret these results as robustness under inactive sources, context-dependent interaction, and structural uncertainty under incomplete cross-context information.
\end{abstract}

\maketitle

\section{Introduction}

Aggregation is a fundamental component of fuzzy systems, information fusion, decision making, and multi-criteria evaluation. Most classical models are formulated on a fixed universe of inputs: one operator combines a prescribed collection of criteria, rules, features, experts, or information sources; see
\cite{Grabisch_book2}. In many context-aware systems, however, the active collection itself varies. Sensors may become unavailable, rules may be activated only under specific operating conditions, criteria may be relevant only in selected decision scenarios, and sources may be admissible only in particular contexts. The object to be evaluated is then not merely a fixed-dimensional vector, but a profile together with the context on which that profile is active.

Conditional aggregation operators (CAOs), introduced in~\cite{BHHK} and subsequently used in generalized level-measure constructions~\cite{BHKK}, formalize aggregation relative to a conditional set. For one fixed conditional set, a CAO is equivalent to an ordinary monotone functional acting on the restricted profile. This observation is correct, but it does not capture the additional structure that appears when the conditional set varies. A family of conditional aggregation operators (FCA) consists of local aggregation rules defined on different profile spaces, and these rules may themselves depend on their contexts. The central issue is therefore not only how each local profile is aggregated, but also how the local rules are related when the active context changes.

This issue is connected with several established directions in fuzzy-systems research. Nonadditive aggregation and fuzzy integrals model interaction, redundancy, and complementarity among sources or criteria. The universal-integral framework provides a common setting for prominent nonadditive integrals~\cite{KMP2010}, while interval-valued Choquet aggregation, fuzzy-measure optimization, and computational generation of fuzzy measures remain active topics~\cite{Bustince2013,BeliakovWu2024,BeliakovSampling2024}. Recent contributions include Choquet-like integrals learned from data for classification and fault diagnosis~\cite{WangZhangShen2024}, Choquet-based multi-criteria procedures under richer fuzzy information~\cite{Qin2024}, and integral models for hesitant fuzzy information fusion~\cite{Gao2023}. These works confirm the continuing importance of nonlinear aggregation and interaction modeling in contemporary fuzzy systems.

A related line addresses incomplete, modular, or distributed information, including multi-criteria models with incompletely specified weights~\cite{WangZhang2013}, fuzzy group decision making with incomplete preferences~\cite{Capuano2018}, and fuzzy-network architectures combining local subsystems~\cite{Yaakob2017}. Such approaches usually retain a fixed ambient collection of variables and estimate, optimize, or complete unknown values or parameters. Our setting is complementary: the active domain itself changes, and different admissible domains may carry different aggregation laws. The question is therefore not how to complete one model on a fixed universe, but when local models defined on different domains are mutually compatible and when they admit one global representation.

We address this question by representing a family of conditional aggregation operators as a fibered system. Each admissible context $E$ carries a profile space $\bF_E$ and a monotone local functional $G_E$. An FCA is represented by the indexed family $\mathbf G=\{G_E:E\in\cE^0\}$. This viewpoint separates two logically distinct levels. Fiberwise properties, such as profile monotonicity, idempotency, homogeneity, or internality, describe individual local aggregation rules. Cross-context properties, such as monotonicity with respect to conditional sets, consistency under zero extension, equivariance, and global representability, describe relations between different fibers. Strong regularity of every local functional does not, by itself, guarantee coherent behavior when the context changes.

The first main contribution is a global representation theorem. We introduce \emph{global order compatibility}, which compares local profiles after their canonical zero extensions to the ambient space. This condition is necessary and sufficient for the existence of a nondecreasing global functional generating all local rules, even when the full universe is not an admissible context. Whenever such a representation exists, we construct its least and greatest monotone generators, thereby describing its nonuniqueness and characterizing uniqueness. The theorem also yields a hierarchy between globally representable, zero-extension-consistent, and arbitrary FCAs, with both inclusions strict for suitable context systems.

The second contribution is an extension theory for finite capacity-based systems. Local fuzzy measures specify a set function only on part of the Boolean lattice of the ambient universe. We prove that every monotone partial capacity containing the bottom and top elements admits normalized capacity extensions and derive explicit least and greatest extensions. For local capacities on overlapping contexts, the exact gluing condition is agreement on every common coalition. Thus, nested compatibility, which governs zero-extension consistency, and overlap compatibility, which governs global representability, are genuinely different requirements.

The third contribution connects this extension theory with fuzzy integrals. We introduce calibrated representation-preserving integral schemes, characterized by invariance under simultaneous zero extension of a profile and compatible extension of its monotone set function. The class contains the discrete Choquet integral and the smallest semicopula-based universal integrals, including the Sugeno and Shilkret integrals. For all such schemes, zero-extension consistency is equivalent to projective compatibility of the local set functions, while a global integral representation exists exactly under overlap compatibility. If the scheme is monotone with respect to the capacity, the extremal capacity extensions yield attainable lower and upper scores and hence the smallest interval containing all globally coherent evaluations.

These intervals are related to, but different from, interval-valued and robust fuzzy integrals, where uncertainty is already encoded in interval inputs or capacities~\cite{Bustince2013,GrecoRindone2013}. Here uncertainty is generated by the context structure: local capacities may be completely known, while coalitions involving sources or criteria that never occur together remain unspecified. The resulting bounds quantify structural uncertainty caused specifically by missing cross-context interaction information.

The proposed theory is therefore not another isolated aggregation operator or a competing axiomatization of fuzzy integrals. It provides a representation and extension layer for fuzzy systems with varying active sources, rules, or criteria. Weighted, Choquet, Sugeno, Shilkret, t-norm, t-conorm, or evidence-fusion mechanisms may serve as local rules; the fibered framework determines whether they form a coherent family, whether they arise from one global model, and what can be concluded when the global interaction structure is only partially specified. The applications illustrate robustness under inactive zero-valued sources, genuinely context-dependent interactions, and sharp multi-criteria score bounds without arbitrary completion of unassessed coalitions.

The paper is organized as follows. Section~\ref{sec:II} introduces CAOs and their fibered representation. Section~\ref{sec:III} distinguishes fiberwise and cross-context properties. Section~\ref{sec:IV} establishes the global representation theorem and hierarchy. Section~\ref{sec:V} develops the extension theory for partial and local capacities. Section~\ref{sec:VI} studies representation-preserving integral schemes and robust bounds. Section~\ref{sec:VII} classifies representative constructions, and Section~\ref{sec:VIII} presents the applications.

\section{Conditional Aggregation as a Fibered Object}\label{sec:II}

\subsection{Preliminaries}
Let $X$ be a nonempty set. When measurability is relevant, let $\Sigma\subseteq 2^X$ be an ambient $\sigma$-algebra. Let $\cE\subseteq\Sigma$ be a nonempty collection of admissible conditional sets and put $\cE^0:=\cE\setminus\{\emptyset\}.$ In this paper, any conditional aggregation operator is considered only for sets in $\cE^0$. The empty set is not part of the family-level theory below; it may be treated separately in applications where a specific convention is needed.

Denote by $\bF$ the class of all nonnegative bounded $\Sigma$-measurable functions on $X$. For $f,g\in\bF$, we write $f\le g$ on a set $E$ if $f(x)\le g(x)$ for every $x\in E$. The indicator of $E\in\Sigma$ is denoted by $\mI{E}$, i.e., $\mI{E}(x)=1$ for $x\in E$, and $\mI{E}(x)=0$ otherwise. When an indicator is used on a local fiber, it is understood as the corresponding restricted indicator.

\begin{definition}
Let $E\in\cE^0$. A mapping $\sA(\cdot|E)\colon\bF\to[0,\infty]
$ is called a \textit{conditional aggregation operator with respect to $E$} (CAO, for short) if it satisfies:
\begin{itemize}
\item[(C1)] $\sA(f|E)\le \sA(g|E)$ whenever $f,g\in\bF$ and $f\le g$ on $E$;
\item[(C2)] $\sA(\mI{E^c}|E)=0$, where $E^c=X\setminus E$.
\end{itemize}
\end{definition}

A \emph{family of conditional aggregation operators} over $\cE$ (FCA, for short) is a family
$$\bA=\{\sA(\cdot|E):E\in\cE^0\},$$
where each $\sA(\cdot|E)$ is a CAO with respect to $E$. For $E\in\cE^0$, let $\Sigma_E:=\{B\cap E:B\in\Sigma\}$
be the trace $\sigma$-algebra and let $\bF_E$ denote the class of all nonnegative bounded $\Sigma_E$-measurable functions on $E$. Equivalently, $\bF_E=\{f|_E:f\in\bF\}$, since every bounded $\Sigma_E$-measurable profile admits a bounded $\Sigma$-measurable extension to $X$.

\subsection{Restriction Principle}
Although a conditional aggregation operator is formally defined on the ambient function space $\bF$, its value depends exclusively on the information carried by the conditional set $E$. This suggests that the essential object is not the mapping $\sA(\cdot|E)\colon\bF\to[0,\infty],$
but rather the corresponding functional acting directly on the local profile space $\bF_E$. The next proposition makes this observation precise. It shows that every CAO is uniquely represented by a nondecreasing functional on $\bF_E$, which will later serve as the basic building block of the fibered framework.

\begin{proposition}[Restriction principle]\label{prop:restriction-principle}
Let $E\in\cE^0$. If $\sA(\cdot|E)$ is a CAO, then there is a unique mapping $G_E\colon\bF_E\to[0,\infty]$ such that
\begin{equation}\label{eq:restriction-representation}
\sA(f|E)=G_E(f|_E),\qquad f\in\bF.
\end{equation}
Moreover, $G_E$ is nondecreasing with respect to the pointwise order on $\bF_E$ and satisfies $G_E(0_E)=0$. Conversely, every nondecreasing $G_E\colon\bF_E\to[0,\infty]$ satisfying $G_E(0_E)=0$ generates a CAO by \eqref{eq:restriction-representation}.
\end{proposition}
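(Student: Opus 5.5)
The plan is to define $G_E$ by choosing an extension and then verify that it is well defined. For $h\in\bF_E$, the identity $\bF_E=\{f|_E:f\in\bF\}$ supplies some $f\in\bF$ with $f|_E=h$. We set $G_E(h):=\sA(f|E)$. The key observation is that (C1) forces $\sA(\cdot|E)$ to depend only on $f|_E$. If $f|_E=g|_E$, then $f\le g$ on $E$ and $g\le f$ on $E$, so (C1) applied twice gives $\sA(f|E)=\sA(g|E)$. Hence $G_E$ does not depend on the chosen extension, and \eqref{eq:restriction-representation} holds by construction. Uniqueness is immediate: any $G'_E$ satisfying \eqref{eq:restriction-representation} agrees with $G_E$ on every $h=f|_E$, and these are all elements of $\bF_E$.

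For monotonicity, let $h_1\le h_2$ pointwise on $E$, with extensions $f_1,f_2\in\bF$. Then $f_1\le f_2$ on $E$, so (C1) gives $G_E(h_1)=\sA(f_1|E)\le\sA(f_2|E)=G_E(h_2)$. For the normalization, note that $\mI{E^c}\in\bF$ because $E\in\Sigma$, and $\mI{E^c}|_E=0_E$. Condition (C2) then yields $G_E(0_E)=\sA(\mI{E^c}|E)=0$.

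For the converse, start from a nondecreasing $G_E$ with $G_E(0_E)=0$ and define $\sA(f|E):=G_E(f|_E)$. This is well defined, since $f|_E\in\bF_E$ for $f\in\bF$ (restriction preserves boundedness, nonnegativity and trace measurability). For (C1), if $f\le g$ on $E$ then $f|_E\le g|_E$ in $\bF_E$, and monotonicity of $G_E$ gives $\sA(f|E)\le\sA(g|E)$. For (C2), we have $\sA(\mI{E^c}|E)=G_E(0_E)=0$.

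There is no genuine obstacle here. The only point needing care is well-definedness, which requires both that every local profile has a global extension and that (C1) be applied in both directions. The extension property is supplied by the stated equivalence $\bF_E=\{f|_E:f\in\bF\}$: extend by zero off $E$, and measurability follows from the trace $\sigma$-algebra together with $E\in\Sigma$.
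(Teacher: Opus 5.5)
Your proposal is correct and matches the paper's proof step for step: well-definedness comes from applying (C1) in both directions, uniqueness from every local profile being a restriction, monotonicity from (C1), and $G_E(0_E)=0$ from (C2) via $\mI{E^c}|_E=0_E$. The only difference is that you spell out the converse and the measurability of zero extensions, both of which the paper treats as immediate.
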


\begin{proof}
If $f|_E=g|_E$, then $f\le g$ and $g\le f$ on $E$. By (C1),
$\sA(f|E)=\sA(g|E).$ Hence, $\sA(f|E)$ depends only on $f|_E$. Define
$G_E(u):=\sA(f|E),$ where $f\in\bF$ is any extension of $u\in\bF_E$. The previous argument shows that $G_E$ is well defined. 

To prove uniqueness, suppose that
$H_E\colon\bF_E\to[0,\infty]$ also satisfies $\sA(f|E)=H_E(f|_E)$
for every $f\in\bF$. Let $u\in\bF_E$, and let $f$ be any extension of $u$. Then $H_E(u)=\sA(f|E)=G_E(u),$ so $H_E=G_E$. Thus, $G_E$ is unique.

Monotonicity follows from (C1), and
$G_E(0_E)=0$ follows from (C2), since $\mI{E^c}|_E=0_E.$ The converse is immediate.
\end{proof}

The Restriction principle identifies the intrinsic local object associated with a conditional aggregation operator. For a fixed conditional set $E$, a CAO is completely characterized by the corresponding monotone zero-preserving functional
$G_E\colon\bF_E\to[0,\infty].$ The mathematical novelty of the conditional aggregation operator framework, however, does not lie in this local representation itself. It arises when the conditional set is allowed to vary and one studies a family of local functionals together with the compatibility relations between them. This viewpoint naturally leads to the fibered framework developed in the following sections.

From now on, we regard each $u\in\bF_E$ as an $E$-indexed profile. The term \emph{profile} emphasizes that each value is attached to a specific element of the conditional set rather than to a position in an ordered tuple. Consequently, no ordering of $E$ is required to define $G_E(u)$. If a coordinate representation is desired, one may choose an arbitrary bijection $\tau\colon I\to E$ and represent the same profile on the index set $I$ by the pullback $u\circ\tau$. Such a representation is merely a choice of coordinates. Different bijections produce different coordinate descriptions of the same profile, and independence of this choice is an additional invariance property rather than part of the definition. In the finite setting, this invariance reduces to the classical symmetry of aggregation functions.

\subsection{FCA Representation}
By Proposition~\ref{prop:restriction-principle}, every FCA can be represented by a family
$$\mathbf G=\{G_E:E\in\cE^0\},\qquad G_E\colon\bF_E\to[0,\infty].$$
The full family may also be encoded as a single mapping on the tagged disjoint union $$\mathfrak F_{\cE}:=\bigsqcup_{E\in\cE^0}(\{E\}\times\bF_E)$$ by setting $\mathbb G(E,u)=G_E(u)$. This is only a global representation of an indexed family. It is not a single CAO, since the argument contains both the conditional set $E$ and an $E$-indexed profile $u$.

\section{Fiberwise and Cross-Set Properties}\label{sec:III}

The family representation reveals two logically distinct levels of properties. A property is \emph{fiberwise} if it can be formulated for a single functional $G_E\colon\bF_E\to[0,\infty]$ without reference to any other conditional set. Monotonicity in the profile, idempotency, homogeneity, and symmetry under self-bijections of $E$ are properties of this type. For example, a CAO is idempotent on a fixed conditional set $E$ if
\begin{equation}\label{eq:CAOidempotent}
G_E(b 1_{E})=b,\qquad b\ge0.
\end{equation}
The FCA is fiberwise idempotent if this holds for every $E\in\cE^0$. If the condition is imposed only for sets in a subcollection $\cE'\subseteq\cE^0$, then it is the FCA, not a single fixed CAO, that is idempotent on $\cE'$. Here $1_{E}$ denotes the constant-one profile on the fiber $E$.

A property is \emph{cross-set} if it compares functionals assigned to different conditional sets. Such a property cannot be checked separately on each $G_E$; it requires compatibility relations between members of the family. Monotonicity with respect to set inclusion, consistency under zero extension, trace generation, and equivariance between different conditional sets are cross-set properties.

\subsection{Internality and Idempotency}
Fiberwise idempotency is closely related to the classical internality property of aggregation functions. For $u\in\bF_E$, write
$$\underline u_E:=\inf_{x\in E}u(x),\qquad
\overline u_E:=\sup_{x\in E}u(x).$$

\begin{proposition}\label{prop:internality-idempotency}
Let $E\in\cE^0$ and let $G_E\colon\bF_E\to[0,\infty]$ be nondecreasing and normalized by $G_E(0_E)=0$. Then $G_E$ is idempotent, see \eqref{eq:CAOidempotent},
if and only if
\begin{equation}\label{eq:internality}
\underline u_E\le G_E(u)\le \overline u_E
\end{equation}
for every $u\in\bF_E$.
\end{proposition}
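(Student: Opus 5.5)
The proof splits into the two directions, and both use only monotonicity of $G_E$ on $\bF_E$ together with the sandwich of a profile between two constants.

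\emph{Idempotency implies internality.} Let $u\in\bF_E$. Since $u$ is nonnegative and bounded, both $\underline u_E$ and $\overline u_E$ are finite nonnegative reals. Pointwise on $E$ we have
\[
\underline u_E\,1_E\le u\le \overline u_E\,1_E .
\]
Both constant profiles belong to $\bF_E$, because constants are $\Sigma_E$-measurable. Monotonicity of $G_E$ then gives $G_E(\underline u_E 1_E)\le G_E(u)\le G_E(\overline u_E 1_E)$. Applying \eqref{eq:CAOidempotent} with $b=\underline u_E$ and with $b=\overline u_E$ turns this chain into \eqref{eq:internality}.

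\emph{Internality implies idempotency.} Fix $b\ge 0$ and take $u=b1_E$. Because $E\in\cE^0$ is nonempty, the infimum and supremum over $E$ are taken over a nonempty set, so $\underline u_E=\overline u_E=b$. Then \eqref{eq:internality} forces $G_E(b1_E)=b$. In this direction monotonicity and the normalization $G_E(0_E)=0$ are not even used; normalization is in fact the case $b=0$ of idempotency.

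There is no real obstacle here. The only points that need a remark are technical:
\begin{itemize}
\item Finiteness of $\underline u_E$ and $\overline u_E$ comes from the boundedness built into $\bF$.
\item Measurability of the constant profiles is immediate.
\item Nonemptiness of $E$ is guaranteed by working in $\cE^0$. It is needed for the second direction: if $E$ were empty, the infimum and supremum would be degenerate ($+\infty$ and $-\infty$, resp.), and the equivalence could fail.
\end{itemize}
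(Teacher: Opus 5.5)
Your proof is correct and follows the paper's argument: sandwich $u$ between the constant profiles $\underline u_E 1_E$ and $\overline u_E 1_E$ and apply monotonicity and idempotency for one direction, then evaluate the internality inequalities at $u=b1_E$ for the converse. One minor quibble concerns your closing side remark. For $E=\emptyset$ both idempotency and internality would be unsatisfiable, so the equivalence would hold vacuously rather than fail. This has no bearing on your proof, since $E\in\cE^0$ is nonempty.
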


\begin{proof}
Assume first that $G_E$ is idempotent. Since $\underline u_E 1_{E}\le u\le \overline u_E 1_{E},$ monotonicity gives
$$G_E(\underline u_E 1_{E})\le G_E(u)\le G_E(\overline u_E 1_{E}),$$ 
which is exactly \eqref{eq:internality}. Conversely, applying \eqref{eq:internality} to the constant profile $u=b 1_{E}$ yields $G_E(b 1_{E})=b$.
\end{proof}

If $G_E$ is homogeneous of degree one, i.e., $G_E(b u)=bG_E(u)$ for every $b\ge0$ and $u\in\bF_E$, then idempotency is equivalent to the single normalization condition $G_E(1_{E})=1.$ Indeed, homogeneity gives $G_E(b 1_{E})=bG_E(1_{E})$. Thus, in homogeneous models, the constant response is controlled by one scalar normalization condition on each fiber.

\subsection{Size-Dependent Idempotency}
In many conditional systems the response on constant profiles depends on the size, measure, or structural weight of the conditional set. Let
$s\colon\cE^0\to[0,\infty]$ be a size functional. Examples include cardinality, measure, capacity value, or an application-dependent complexity index. We say that an FCA has a \emph{size-dependent constant response} with respect to $s$ if there exists a function
$\eta\colon[0,\infty]\times[0,\infty)\to[0,\infty]$ such that
\begin{equation}\label{eq:size-response}
G_E(b 1_{E})=\eta(s(E),b)
\end{equation}
for all $E\in\cE^0$ and $b\ge0$.

\begin{proposition}\label{prop:size-idempotency}
Assume that an FCA satisfies \eqref{eq:size-response}. For $c\in s(\cE^0)$ put $$\cE_c:=\{E\in\cE^0:s(E)=c\}.$$ Then the FCA is idempotent on $\cE_c$ if and only if $\eta(c,b)=b$ with $b\ge0.$ It is fiberwise idempotent on all of $\cE^0$ if and only if $\eta(t,b)=b$ for all $t\in s(\cE^0)$ and all $b\ge0$.
\end{proposition}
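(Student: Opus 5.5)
The plan is to unwind the definitions, since the statement is essentially a reformulation of \eqref{eq:CAOidempotent} through \eqref{eq:size-response}. By definition, the FCA is idempotent on a subcollection $\cE'\subseteq\cE^0$ exactly when $G_E(b1_E)=b$ holds for every $E\in\cE'$ and every $b\ge0$. By \eqref{eq:size-response}, for each $E\in\cE_c$ we have $G_E(b1_E)=\eta(s(E),b)=\eta(c,b)$. So idempotency on $\cE_c$ reads $\eta(c,b)=b$ for all $b\ge0$. The only thing to check is that both directions really are equivalences, and this is where the choice $c\in s(\cE^0)$ matters.

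For the first claim, the direction ``$\eta(c,\cdot)=\id$ implies idempotency on $\cE_c$'' is immediate from the substitution above. For the converse, I will use that $c\in s(\cE^0)$ guarantees $\cE_c\neq\emptyset$. I pick some $E_0\in\cE_c$; idempotency of $G_{E_0}$ then gives $b=G_{E_0}(b1_{E_0})=\eta(c,b)$ for every $b\ge0$. This nonemptiness is the only mildly delicate point, and it is the main (if small) obstacle. If $c\notin s(\cE^0)$, the left side of the equivalence would hold vacuously and nothing would be forced on $\eta(c,\cdot)$, which explains why the statement restricts $c$ to the range of $s$.

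For the second claim, I will observe that $\cE^0=\bigcup_{t\in s(\cE^0)}\cE_t$. This is a partition, since every $E$ lies in $\cE_{s(E)}$. Fiberwise idempotency on $\cE^0$ is therefore equivalent to idempotency on each $\cE_t$ with $t\in s(\cE^0)$. Applying the first claim to each such $t$ yields $\eta(t,b)=b$ for all $t\in s(\cE^0)$ and $b\ge0$, and conversely. No monotonicity or Proposition~\ref{prop:internality-idempotency} is needed. One could add the remark that, combined with that proposition, the condition is equivalent to internality on each fiber of the corresponding size class.
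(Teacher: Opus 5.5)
Your proposal is correct and follows the paper's proof, which says only that the result follows directly from the identity $G_E(b1_E)=\eta(s(E),b)$. Two points the paper leaves implicit are made explicit in your version: $c\in s(\cE^0)$ guarantees $\cE_c\neq\emptyset$, which the converse direction needs, and the second claim follows from the partition $\cE^0=\bigcup_{t\in s(\cE^0)}\cE_t$.
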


\begin{proof}
This follows directly from the identity $G_E(b 1_{E})=\eta(s(E),b)$.
\end{proof}

\begin{example}
Let $X$ be finite and let $$G_E(u)=q(|E|)\sup_{x\in E}u(x),$$ where $q\colon\mN\to[0,\infty)$. Then
$G_E(b 1_{E})=q(|E|)b,$ so $\eta(t,b)=q(t)b$. The family is idempotent precisely on those cardinality levels $t$ for which $q(t)=1$. For instance, $q(t)=2/t$ yields idempotency exactly on two-element conditional sets.
\end{example}

Size-dependent idempotency illustrates why FCA-level language is needed. The property is not attached to a single fixed CAO, but to the assignment $E\mapsto G_E$ together with the structural class of conditional sets on which the constant response is required.

\subsection{Canonical Maps Between Fibers}
Let $F,E\in\cE^0$ with $F\subseteq E$. The restriction map is
\begin{equation*}
R_{E,F}\colon\bF_E\to\bF_F,\qquad R_{E,F} u=u|_F.
\end{equation*}
The zero-extension map is $Z_{F,E}\colon\bF_F\to\bF_E$, where
$$(Z_{F,E}v)(x)=
\begin{cases}
v(x),&x\in F,\\
0,&x\in E\setminus F.
\end{cases}
$$ These maps satisfy $R_{E,F}\circ Z_{F,E}=\id_{\bF_F}$. Ambient-space zero extensions are always written explicitly, so profiles on different fibers are never identified without a canonical map. If $H\subseteq F\subseteq E$, then
$$R_{E,H}=R_{F,H}\circ R_{E,F},\qquad
Z_{H,E}=Z_{F,E}\circ Z_{H,F}.$$

\begin{definition}\label{def:set-monotonicity-family}
An FCA represented by $\mathbf G=\{G_E:E\in\cE^0\}$ is said to be \emph{nondecreasing with respect to conditional sets} if
\begin{equation}\label{eq:set-nondecreasing}
G_F(R_{E,F}u)\le G_E(u)
\end{equation}
for all $F\subseteq E$ in $\cE^0$ and all $u\in\bF_E$. It is \emph{nonincreasing with respect to conditional sets} if the reverse inequality holds.
\end{definition}
In the original CAO notation, \eqref{eq:set-nondecreasing} becomes
$$\sA(f|F)\le \sA(f|E),\qquad F\subseteq E,$$ for every $f\in\bF$.

\begin{definition}\label{def:zero-extension-consistency}
An FCA represented by $\mathbf G=\{G_E:E\in\cE^0\}$ is \emph{consistent under zero extension} if
\begin{equation}\label{eq:zec}
G_E(Z_{F,E}v)=G_F(v)
\end{equation}
for every $F\subseteq E$ in $\cE^0$ and every $v\in\bF_F$.
\end{definition}
Equivalently, in CAO notation,
$$\sA(f\mI{F}|E)=\sA(f|F),\qquad F\subseteq E,$$ for every $f\in\bF$. Thus, adjoining new elements with a value of zero does not change the aggregated value.

\begin{proposition}\label{prop:zero-extension-implies-set-monotonicity}
Every FCA that is consistent under zero extension is nondecreasing with respect to conditional sets.
\end{proposition}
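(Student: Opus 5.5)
The plan is a direct pointwise comparison that uses only the monotonicity of each fiber (Proposition~\ref{prop:restriction-principle}) together with the canonical maps $R_{E,F}$ and $Z_{F,E}$. Fix $F\subseteq E$ in $\cE^0$ and $u\in\bF_E$. Put $v:=R_{E,F}u=u|_F\in\bF_F$. We must show $G_F(v)\le G_E(u)$.

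First, apply zero-extension consistency \eqref{eq:zec} to $v$. This gives
\begin{equation*}
G_F(R_{E,F}u)=G_F(v)=G_E(Z_{F,E}v)=G_E\bigl(Z_{F,E}R_{E,F}u\bigr).
\end{equation*}
So the problem is moved into the single fiber $\bF_E$.

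Second, compare $Z_{F,E}R_{E,F}u$ with $u$ pointwise on $E$. For $x\in F$, both functions equal $u(x)$. For $x\in E\setminus F$, the first equals $0$, while $u(x)\ge 0$ because profiles are nonnegative. Hence $Z_{F,E}R_{E,F}u\le u$ on $E$. Note that this is the composition in the opposite order to the identity $R_{E,F}\circ Z_{F,E}=\id_{\bF_F}$ recorded earlier. We also need $Z_{F,E}v\in\bF_E$, i.e.\ that it is bounded and $\Sigma_E$-measurable. This holds because $Z_{F,E}v=u\cdot\mI{F}|_E$ and $F\in\Sigma$.

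Third, since $G_E$ is nondecreasing on $\bF_E$ by Proposition~\ref{prop:restriction-principle}, we get $G_E(Z_{F,E}R_{E,F}u)\le G_E(u)$. Combining this with the first step yields \eqref{eq:set-nondecreasing}.

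There is no genuine obstacle. The only points needing care are the measurability of the zero extension, which is handled in the second step, and the nonnegativity of profiles, which makes the pointwise inequality work. The same argument can be written in CAO notation: $\sA(f|F)=\sA(f\mI{F}|E)\le\sA(f|E)$ by (C1), since $f\mI{F}\le f$ on $E$.
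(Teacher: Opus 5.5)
Your proof is correct and follows the paper's argument exactly: apply \eqref{eq:zec} to $R_{E,F}u$, note that $Z_{F,E}(R_{E,F}u)\le u$ on $E$ by nonnegativity, and conclude by monotonicity of $G_E$. Your extra check that the zero extension lies in $\bF_E$ (using $F\in\Sigma$) is a harmless detail that the paper leaves implicit.
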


\begin{proof}
Let $F\subseteq E$ and $u\in\bF_E$. Since profiles are nonnegative,
$Z_{F,E}(R_{E,F}u)\le u$
pointwise on $E$. The monotonicity of $G_E$ together with \eqref{eq:zec} gives
$$G_F(R_{E,F}u)=G_E(Z_{F,E}(R_{E,F}u))\le G_E(u),$$
which completes the proof.
\end{proof}

The reverse implication does not hold. For instance, the mapping $G_E(u)=\sum\limits_{x\in E}u(x)+|E|\max\limits_{x\in E}u(x)$
defines an FCA that is nondecreasing with respect to conditional sets but is not consistent under zero extension.

Zero-extension consistency can be read as support-locality. If $u\in\bF_E$ is zero on $E\setminus F$, then $u=Z_{F,E}(u|_F)$ and hence $G_E(u)=G_F(u|_F)$. Thus, the family does not create new aggregation effects by adjoining zero-valued elements.

\subsection{Trace-Generated Families}
\begin{definition}\label{def:trace-generated-fca}
Let $\Phi\colon\bF\to[0,\infty]$ be nondecreasing and satisfy $\Phi(0_X)=0$. An FCA represented by $\mathbf G=\{G_E:E\in\cE^0\}$ is called \emph{trace-generated} by $\Phi$ if
\begin{equation*}
G_E(u)=\Phi(Z_{E,X}u)
\end{equation*}
for every $E\in\cE^0$ and $u\in\bF_E$.
\end{definition}

\begin{proposition}\label{prop:trace-generated-zero-extension}
Every trace-generated FCA is consistent under zero extension and, consequently, is nondecreasing with respect to conditional sets.
\end{proposition}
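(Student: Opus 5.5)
The plan is to verify the defining identity \eqref{eq:zec} directly, using the transitivity of zero-extension maps recorded in the subsection on canonical maps. Then the second assertion follows from Proposition~\ref{prop:zero-extension-implies-set-monotonicity}. Before that, I will confirm that a trace-generated family is an FCA at all. For each $E\in\cE^0$, the functional $G_E=\Phi\circ Z_{E,X}$ is nondecreasing on $\bF_E$, because $Z_{E,X}$ preserves the pointwise order and $\Phi$ is nondecreasing. It also satisfies $G_E(0_E)=\Phi(0_X)=0$. By the converse part of Proposition~\ref{prop:restriction-principle}, each $G_E$ therefore generates a CAO.

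For consistency, fix $F\subseteq E$ in $\cE^0$ and $v\in\bF_F$. The composition rule for zero extensions, applied to $F\subseteq E\subseteq X$, gives $Z_{E,X}\circ Z_{F,E}=Z_{F,X}$. The paper states this rule for members of $\cE^0$, but it holds for any nested sets, and in particular with $X$ as the top set. I will check it pointwise by cases $x\in F$, $x\in E\setminus F$ and $x\in X\setminus E$. In the first case both sides equal $v(x)$, and in the other two both sides equal $0$. Hence
$$G_E(Z_{F,E}v)=\Phi(Z_{E,X}Z_{F,E}v)=\Phi(Z_{F,X}v)=G_F(v),$$
which is exactly \eqref{eq:zec}.

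The only point needing care is measurability: $Z_{F,X}v$ must lie in $\bF$. Since $F\in\Sigma$ and $v$ is $\Sigma_F$-measurable, the preimage of any Borel set under $Z_{F,X}v$ is either a set in $\Sigma_F\subseteq\Sigma$ or the union of such a set with $X\setminus F\in\Sigma$, so it belongs to $\Sigma$. The same argument shows $Z_{F,E}v\in\bF_E$. I expect no real obstacle here; the argument is a short computation. Finally, Proposition~\ref{prop:zero-extension-implies-set-monotonicity} yields monotonicity with respect to conditional sets.
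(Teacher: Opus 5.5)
Your proof is correct and matches the paper's argument: the identity $Z_{E,X}\circ Z_{F,E}=Z_{F,X}$ gives $G_E(Z_{F,E}v)=\Phi(Z_{F,X}v)=G_F(v)$, and monotonicity with respect to conditional sets then follows from Proposition~\ref{prop:zero-extension-implies-set-monotonicity}. Your extra checks, that each $\Phi\circ Z_{E,X}$ is monotone and zero-preserving and that zero extension preserves measurability since $F\in\Sigma$, are details the paper leaves implicit, and you handle them correctly.
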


\begin{proof}
If $F\subseteq E$ and $v\in\bF_F$, then $Z_{E,X}(Z_{F,E}v)=Z_{F,X}v$. Hence, $$G_E(Z_{F,E}v)=\Phi(Z_{E,X}(Z_{F,E}v))=\Phi(Z_{F,X}v)=G_F(v).$$ The second assertion follows from Proposition~\ref{prop:zero-extension-implies-set-monotonicity}.
\end{proof}

\begin{example}
\label{ex:zec-nonrepresentable}
Let $X=\{1,2,3\}$ and consider the admissible context family
$\cE^0=\{\{1\},\{1,2\},\{2,3\}\}$. Define
\begin{align*}
G_{\{1\}}(u)&=u(1), &
G_{\{1,2\}}(u)&=u(1)+u(2),\\
G_{\{2,3\}}(u)&=u(2)u(3).&&
\end{align*}
The only nontrivial inclusion is $\{1\}\subseteq\{1,2\}$, and the corresponding zero-extension identity holds. Nevertheless, the ambient profile $(0,1,0)$ has two local representations:
\[
\begin{array}{c|c|c}
E & u\in\bF_E & G_E(u)\\ \hline
\{1,2\} & (0,1) & 1\\
\{2,3\} & (1,0) & 0.
\end{array}
\]
Thus, the local values do not define a single-valued functional on the trace domain, so no global trace generator exists. This example shows why compatibility only along nested contexts is weaker than global representability.
\end{example}

Trace-generated FCAs represent the case in which all local aggregations are obtained from one fixed global functional by zero extension. This is an important special case, but it does not exhaust the FCA framework. The local functional assigned to $E$ may depend on the conditional set itself through a measure, capacity, system of weights, normalization factor, interaction pattern, or structural constraint. Such families are genuinely conditional and need not be trace-generated.

\subsection{Equivariance}
Let $\Gamma$ be a group of bimeasurable bijections $\sigma\colon X\to X$ of $(X,\Sigma)$ such that $\sigma(E)\in\cE^0$ whenever $E\in\cE^0$. For $u\in\bF_E$, define the transported profile $u^\sigma\in\bF_{\sigma(E)}$ by
$$u^\sigma(y):=u(\sigma^{-1}(y)),\qquad y\in\sigma(E).$$

\begin{definition}
An FCA represented by $\mathbf G=\{G_E:E\in\cE^0\}$ is called $\Gamma$-\textit{equivariant} if $$G_{\sigma(E)}(u^\sigma)=G_E(u)$$
for every $E\in\cE^0$, every $u\in\bF_E$, and every $\sigma\in\Gamma$.
\end{definition}

Equivariance means that relabeling equivalent sources, criteria, or rules does not change the aggregated value. It is a family-level symmetry: it compares functionals assigned to different conditional sets.

\begin{proposition}\label{prop:trace-equivariance}
Assume that an FCA is trace-generated by $\Phi$ and that $\Phi$ is $\Gamma$-invariant, i.e., $$\Phi(f^\sigma)=\Phi(f),
\qquad f^\sigma(y)=f(\sigma^{-1}(y)).$$ Then the FCA is $\Gamma$-equivariant.
\end{proposition}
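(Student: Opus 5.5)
The argument is a direct unwinding of the definitions. The one identity it needs is that zero extension commutes with transport by $\sigma$: for $E\in\cE^0$, $u\in\bF_E$ and $\sigma\in\Gamma$,
\[
Z_{\sigma(E),X}(u^\sigma)=(Z_{E,X}u)^\sigma .
\]
Once this is available, the trace-generated representation and the $\Gamma$-invariance of $\Phi$ give the equivariance identity in one line.

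\textbf{Step 1: transported profiles are admissible.} I first check that $u^\sigma\in\bF_{\sigma(E)}$ and $f^\sigma\in\bF$. Boundedness and nonnegativity are preserved because $\sigma$ is a bijection. Measurability follows from bimeasurability of $\sigma$: for a Borel set $B$,
\[
\{y\in\sigma(E):u(\sigma^{-1}(y))\in B\}=\sigma\bigl(\{x\in E:u(x)\in B\}\bigr),
\]
and this is the image under $\sigma$ of a trace set $A\cap E$, hence equals $\sigma(A)\cap\sigma(E)\in\Sigma_{\sigma(E)}$. By hypothesis $\sigma(E)\in\cE^0$, so $G_{\sigma(E)}$ is defined.

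\textbf{Step 2: the commutation identity.} I verify it pointwise at $y\in X$.
\begin{itemize}
\item If $y\in\sigma(E)$, then $\sigma^{-1}(y)\in E$, so both sides equal $u(\sigma^{-1}(y))$.
\item If $y\notin\sigma(E)$, then $\sigma^{-1}(y)\notin E$, because $\sigma$ is a bijection. Both sides are then $0$.
\end{itemize}

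\textbf{Step 3: conclusion.} Using the trace-generated representation, the commutation identity, and then $\Gamma$-invariance of $\Phi$,
\[
G_{\sigma(E)}(u^\sigma)=\Phi\bigl(Z_{\sigma(E),X}u^\sigma\bigr)=\Phi\bigl((Z_{E,X}u)^\sigma\bigr)=\Phi(Z_{E,X}u)=G_E(u).
\]

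The only point needing care is the measurability bookkeeping in Step 1, and it is routine. Everything else is definitional.
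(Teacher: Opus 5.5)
Your proof is correct and follows essentially the same route as the paper: the commutation identity $Z_{\sigma(E),X}(u^\sigma)=(Z_{E,X}u)^\sigma$ combined with the trace-generated representation and the $\Gamma$-invariance of $\Phi$. The only difference is that you verify the commutation identity pointwise and check measurability of the transported profiles, which the paper states without comment.
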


\begin{proof}
For $u\in\bF_E$, one has $Z_{\sigma(E),X}(u^\sigma)=(Z_{E,X}u)^\sigma.$ Therefore,
\begin{align*}
G_{\sigma(E)}(u^\sigma)
& = \Phi(Z_{\sigma(E),X}(u^\sigma)) = \Phi((Z_{E,X}u)^\sigma)
\\ & =\Phi(Z_{E,X}u)=G_E(u),
\end{align*}which gives the result.
\end{proof}

\subsection{Separating Examples}
The preceding implications are not reversible in general. The family $G_E(u)=\sup_{x\in E}u(x)$ is trace-generated by $\Phi(f)=\sup_{x\in X}f(x)$, hence zero-extension consistent and set-nondecreasing. If $\nu$ is a fixed nonnegative measure on $(X,\Sigma)$, then $G_E(u)=\int_Eu\md\nu$ is trace-generated by $\Phi(f)=\int_Xf\md\nu$. By contrast, the arithmetic mean on finite nonempty sets,
$$G_E(u)=\frac{1}{|E|}\sum_{x\in E}u(x),$$ is fiberwise idempotent, but not zero-extension consistent. If $F\subsetneq E$ and $v\not\equiv0$, then
$$G_E(Z_{F,E}v)=\frac{1}{|E|}\sum_{x\in F}v(x)\ne
\frac{1}{|F|}\sum_{x\in F}v(x)=G_F(v).$$ The family $G_E(u)=\inf_{x\in E}u(x)$ is fiberwise idempotent and set-nonincreasing, but not zero-extension consistent, since adding zero-valued elements may decrease the infimum to zero. The sum family
$$G_E(u)=\sum_{x\in E}u(x)$$ is zero-extension consistent and set-nondecreasing, but it is idempotent only on singleton sets. Finally, on $[0,1]$ the product family $$G_E(u)=\prod_{x\in E}u(x)$$ is set-nonincreasing, because every additional factor is at most one. On $[1,\infty)$ the same formula becomes set-nondecreasing. Thus, set monotonicity may depend not only on the formula but also on the admissible range of profiles. These examples show that fiberwise and cross-set properties are logically distinct.

\section{Global Representation of Fibered Families}
\label{sec:IV}

The canonical maps turn the local profile spaces into a fibered system. A local profile can be compared with a profile on another fiber only after both have been embedded into the ambient space. This observation yields a representation criterion that does not require the full universe $X$ to belong to $\cE^0$. We define the \emph{trace domain}
$$\mathscr T_{\cE}
:=
\bigcup_{E\in\cE^0} Z_{E,X}(\bF_E)
\subseteq\bF.
$$ Equivalently,
$$\mathscr T_{\cE} =
\bigl\{
f\in\bF:
\{x\in X:f(x)>0\}\subseteq E
\text{ for some }E\in\cE^0
\bigr\}.
$$ Thus, $\mathscr T_{\cE}$ consists precisely of the ambient profiles whose positive sets are contained in admissible nonempty contexts.

\begin{definition}
\label{def:global-order-compatibility}
An FCA $\mathbf G=\{G_E:E\in\cE^0\}$ is called \emph{globally order compatible} if
\begin{equation}
\label{eq:global-order-compatibility}
Z_{E,X}u\le Z_{F,X}v
\quad\Longrightarrow\quad
G_E(u)\le G_F(v)
\end{equation}
for all $E,F\in\cE^0$, $u\in\bF_E$, and $v\in\bF_F$.
\end{definition}

Condition~\eqref{eq:global-order-compatibility} simultaneously encodes consistency on common zero-extended profiles and monotonicity between profiles coming from different fibers, whenever their zero extensions are pointwise ordered. It is therefore stronger than checking monotonicity separately on each $G_E$.

\begin{theorem}[Fibered global representation]
\label{thm:fibered-global-representation}
For an FCA $\mathbf G=\{G_E:E\in\cE^0\}$, the following assertions are equivalent.
\begin{enumerate}
\item[(i)] The FCA is trace-generated by a nondecreasing functional $\Phi\colon\bF\to[0,\infty]$ satisfying $\Phi(0_X)=0$.
\item[(ii)] The FCA is globally order compatible.
\item[(iii)] The rule
\begin{equation}
\label{eq:partial-trace-functional}
\varphi(Z_{E,X}u):=G_E(u)
\end{equation}
defines a well-defined nondecreasing functional $\varphi\colon\mathscr T_{\cE}\to[0,\infty]$.
\end{enumerate}
If these conditions hold, define for each $f\in\mathbf F$,
\begin{align}
\underline\Phi(f)
&:=\sup\{\varphi(h):h\in\mathscr T_{\cE},\ h\le f\},
\label{eq:minimal-global-generator}\\
\overline\Phi(f)
&:=\inf\{\varphi(h):h\in\mathscr T_{\cE},\ f\le h\},\nonumber
\end{align} under the convention $\inf\emptyset:=\infty$. 
Then $\underline\Phi$ and $\overline\Phi$ are respectively the least and greatest nondecreasing trace generators of $\mathbf G$. In particular, every nondecreasing trace generator $\Phi$ satisfies
\begin{equation}
\label{eq:generator-interval}
\underline\Phi(f)\le\Phi(f)\le\overline\Phi(f),
\qquad f\in\bF,
\end{equation}
and the global generator is unique if and only if $\underline\Phi=\overline\Phi$.
\end{theorem}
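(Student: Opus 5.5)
The plan is to prove the cycle (i)$\Rightarrow$(ii)$\Rightarrow$(iii)$\Rightarrow$(i), and then verify the extremality statements directly from the definitions of $\underline\Phi$ and $\overline\Phi$.

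\emph{The cycle of implications.}
For (i)$\Rightarrow$(ii), suppose $G_E(u)=\Phi(Z_{E,X}u)$ for every $E\in\cE^0$. If $Z_{E,X}u\le Z_{F,X}v$, then monotonicity of $\Phi$ gives $G_E(u)=\Phi(Z_{E,X}u)\le\Phi(Z_{F,X}v)=G_F(v)$.

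For (ii)$\Rightarrow$(iii), suppose $Z_{E,X}u=Z_{F,X}v$. Applying \eqref{eq:global-order-compatibility} in both directions gives $G_E(u)=G_F(v)$, so $\varphi$ is well defined on $\mathscr T_{\cE}$. The same condition applied to $h_1=Z_{E,X}u\le h_2=Z_{F,X}v$ gives $\varphi(h_1)\le\varphi(h_2)$, so $\varphi$ is nondecreasing.

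For (iii)$\Rightarrow$(i), I show that $\underline\Phi$ is a trace generator, which settles (i) and part of the extremal claim at once. The main points are:
\begin{itemize}
\item $\underline\Phi$ is nondecreasing, because enlarging $f$ enlarges the set over which the supremum is taken.
\item $\underline\Phi(0_X)=0$. Any $h\in\mathscr T_{\cE}$ with $h\le 0_X$ is the zero function, and $0_X=Z_{E,X}0_E$ for any $E\in\cE^0$. Since $\cE^0$ is nonempty, $\varphi(0_X)=G_E(0_E)=0$ by Proposition~\ref{prop:restriction-principle}.
\item $\underline\Phi$ extends $\varphi$. For $f\in\mathscr T_{\cE}$, the choice $h=f$ gives $\underline\Phi(f)\ge\varphi(f)$, and monotonicity of $\varphi$ gives $\underline\Phi(f)\le\varphi(f)$.
\end{itemize}
Hence $\underline\Phi(Z_{E,X}u)=G_E(u)$ for all $E$ and $u$.

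\emph{The upper generator.}
The analogous argument works for $\overline\Phi$: it is nondecreasing, and $\overline\Phi(f)=\varphi(f)$ on $\mathscr T_{\cE}$ by the same sandwich argument. In particular $\overline\Phi(0_X)=\varphi(0_X)=0$.

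One detail needs care. For $f\notin\mathscr T_{\cE}$ the set $\{h\in\mathscr T_{\cE}: f\le h\}$ may be empty, and then $\overline\Phi(f)=\infty$. This is allowed because the codomain is $[0,\infty]$. It is also consistent with monotonicity. Indeed, if $f\le g$ and $g$ is dominated by some $h\in\mathscr T_{\cE}$, then so is $f$. Conversely, if $f$ is dominated by nothing in $\mathscr T_{\cE}$, then neither is $g$, since any $h\ge g$ would also satisfy $h\ge f$. The infimum over the smaller set is the larger one, so $\overline\Phi(f)\le\overline\Phi(g)$ in all cases.

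\emph{Extremality and uniqueness.}
Let $\Phi$ be any nondecreasing trace generator. Then $\Phi=\varphi$ on $\mathscr T_{\cE}$. For $h\in\mathscr T_{\cE}$ with $h\le f\le h'$, monotonicity gives $\varphi(h)\le\Phi(f)\le\varphi(h')$. Taking the supremum over $h$ and the infimum over $h'$ yields \eqref{eq:generator-interval}. Since $\underline\Phi$ and $\overline\Phi$ are themselves nondecreasing trace generators, they are the least and greatest ones.

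For uniqueness: if $\underline\Phi=\overline\Phi$, the sandwich \eqref{eq:generator-interval} forces every generator to coincide with them. Conversely, if the generator is unique, then $\underline\Phi$ and $\overline\Phi$ must be equal, as both are generators.

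The only genuinely delicate point is this handling of empty infima and of $0_X$ with the extended value $\infty$. Everything else is order-theoretic bookkeeping.
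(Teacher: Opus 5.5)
Your proposal is correct and takes essentially the same route as the paper. Both go through the cycle (i)$\Rightarrow$(ii)$\Rightarrow$(iii)$\Rightarrow$(i), use $\underline\Phi$ and $\overline\Phi$ as explicit trace generators that agree with $\varphi$ on $\mathscr T_{\cE}$, and get extremality and uniqueness from the squeeze $\varphi(h)\le\Phi(f)\le\varphi(h')$. The differences are minor: the paper also records (iii)$\Rightarrow$(ii) directly, and you spell out why $\overline\Phi$ stays nondecreasing when the infima are empty, which the paper leaves implicit.
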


\begin{proof}
If $\mathbf G$ is trace-generated by $\Phi$ and
$Z_{E,X}u\le Z_{F,X}v$, then
$$
G_E(u)=\Phi(Z_{E,X}u)\le\Phi(Z_{F,X}v)=G_F(v),
$$
so (i) implies (ii).

Assume (ii). If $Z_{E,X}u=Z_{F,X}v$, applying global order compatibility in both directions gives $G_E(u)=G_F(v)$; hence~\eqref{eq:partial-trace-functional} is well defined. If $h\le k$ in $\mathscr T_{\cE}$, choose representations $h=Z_{E,X}u$ and $k=Z_{F,X}v$. Then~\eqref{eq:global-order-compatibility} gives $\varphi(h)\le\varphi(k)$, proving (iii). Conversely, (iii) immediately implies (ii).

Assume (iii). The lower set in~\eqref{eq:minimal-global-generator} is nonempty because $0_X\in\mathscr T_{\cE}$ and $\varphi(0_X)=0$. Both $\underline\Phi$ and $\overline\Phi$ are nondecreasing. If $h\in\mathscr T_{\cE}$, monotonicity of $\varphi$ and the presence of $h$ in both defining families give
$\underline\Phi(h) = \varphi(h) = \overline\Phi(h).$ Thus, both functionals trace-generate $\mathbf G$, proving (i). Finally, any nondecreasing extension $\Phi$ of $\varphi$ satisfies
$\varphi(h)\le\Phi(f)$ whenever $h\le f$ and
$\Phi(f)\le\varphi(h)$ whenever $f\le h$. Taking the supremum and infimum yields~\eqref{eq:generator-interval}; the extremality and uniqueness assertions follow.
\end{proof}

\begin{theorem}[Representation hierarchy]
\label{thm:representation-hierarchy}
Let $\mathfrak A_{\cE}$ be the class of all FCAs on $\cE^0$, let
$\mathfrak Z_{\cE}$ be the subclass of zero-extension-consistent FCAs, and let
$\mathfrak T_{\cE}$ be the subclass of trace-generated FCAs. Then
\begin{equation*}
\mathfrak T_{\cE}\subseteq\mathfrak Z_{\cE}\subseteq\mathfrak A_{\cE}.
\end{equation*}
In general, each inclusion can be strict for a suitable context system. If $X\in\cE^0$, then
$\mathfrak T_{\cE}=\mathfrak Z_{\cE}$.
\end{theorem}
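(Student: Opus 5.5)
The first inclusion $\mathfrak T_{\cE}\subseteq\mathfrak Z_{\cE}$ is exactly Proposition~\ref{prop:trace-generated-zero-extension}. One remark on conventions: here "trace-generated" means generated by some nondecreasing $\Phi$ with $\Phi(0_X)=0$, as in Definition~\ref{def:trace-generated-fca}. The second inclusion $\mathfrak Z_{\cE}\subseteq\mathfrak A_{\cE}$ holds by definition. The substance therefore lies in two tasks: exhibiting context systems on which the inclusions are strict, and proving equality when $X\in\cE^0$.

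For strictness of $\mathfrak T_{\cE}\subsetneq\mathfrak Z_{\cE}$, I would use Example~\ref{ex:zec-nonrepresentable} directly. There $\cE^0=\{\{1\},\{1,2\},\{2,3\}\}$, and the family is zero-extension consistent, since the only nested pair is $\{1\}\subseteq\{1,2\}$ and $G_{\{1,2\}}(u(1),0)=u(1)=G_{\{1\}}(u)$. However, the ambient profile $(0,1,0)$ receives the two values $1$ and $0$. This violates global order compatibility, because the two zero extensions coincide. By Theorem~\ref{thm:fibered-global-representation}, (i)$\Leftrightarrow$(ii), the family is not trace-generated.

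For strictness of $\mathfrak Z_{\cE}\subsetneq\mathfrak A_{\cE}$, I would take a finite context system containing a proper nested pair $F\subsetneq E$, for instance $X=\{1,2\}$ with $\cE^0=\{\{1\},\{1,2\}\}$. On it I would use the arithmetic-mean family from the separating examples: it is an FCA, since each $G_E$ is nondecreasing with $G_E(0_E)=0$, but $G_E(Z_{F,E}v)=\tfrac12 v(1)\neq v(1)=G_F(v)$ whenever $v\ne0$. This also shows why the phrase "for a suitable context system" is needed. If $\cE^0$ contains no nested pairs, zero-extension consistency is vacuous and $\mathfrak Z_{\cE}=\mathfrak A_{\cE}$. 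Similarly, if $\cE^0$ consists only of pairwise disjoint sets, then $\mathfrak T_{\cE}=\mathfrak Z_{\cE}$.

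For the equality when $X\in\cE^0$, take $G\in\mathfrak Z_{\cE}$ and set $\Phi:=G_X$. This map is defined on $\bF_X=\bF$, is nondecreasing, and satisfies $\Phi(0_X)=0$. For any $E\in\cE^0$ we have $E\subseteq X$, so zero-extension consistency \eqref{eq:zec} with the pair $E\subseteq X$ gives $G_X(Z_{E,X}u)=G_E(u)$. Hence $G$ is trace-generated by $\Phi$, and $\mathfrak Z_{\cE}\subseteq\mathfrak T_{\cE}$. Combined with the first inclusion, this gives $\mathfrak T_{\cE}=\mathfrak Z_{\cE}$.

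I do not expect a real obstacle. The only points needing care are checking that the chosen counterexample families are genuine FCAs, and stating precisely that strictness depends on the context system, as the examples above illustrate.
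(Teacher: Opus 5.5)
Your proof is correct and follows the paper's argument essentially line for line. You use Proposition~\ref{prop:trace-generated-zero-extension} for the first inclusion, the arithmetic-mean family on a properly nested pair and Example~\ref{ex:zec-nonrepresentable} (via Theorem~\ref{thm:fibered-global-representation}) for strictness, and $\Phi:=G_X$ with zero-extension consistency along $E\subseteq X$ for the equality; your side remarks (no proper nested pairs gives $\mathfrak Z_{\cE}=\mathfrak A_{\cE}$, pairwise disjoint contexts give $\mathfrak T_{\cE}=\mathfrak Z_{\cE}$) are also correct and explain the qualifier ``for a suitable context system''.
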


\begin{proof}
The first inclusion follows from Proposition~\ref{prop:trace-generated-zero-extension}; the second is immediate. On any context system containing two properly nested finite sets, the normalized arithmetic-mean family is an FCA that is not zero-extension consistent, so the second inclusion can be strict. Example~\ref{ex:zec-nonrepresentable} shows that the first inclusion can be strict: zero-extension consistency holds, but the induced rule on the trace domain is not even single valued, so Theorem~\ref{thm:fibered-global-representation} excludes a trace generator. If $X\in\cE^0$, zero-extension consistency gives
$G_E(u)=G_X(Z_{E,X}u)$, so every member of $\mathfrak Z_{\cE}$ is trace-generated.
\end{proof}

The hierarchy identifies three genuinely different levels: arbitrary local aggregation, compatibility under nested zero extensions, and global representability. The last level is governed by global order compatibility rather than by any property of the individual fibers alone.

\begin{example}\label{ex:extremal-trace-generators}
Let $X=\{1,2\}$, let $\cE^0=\{\{1\},\{2\}\}$, and define
$G_{\{i\}}(u)=u(i)$. The trace domain is the union of the two coordinate axes in $[0,\infty)^2$. Global order compatibility is immediate: two nonzero profiles lying on different axes are incomparable, while profiles lying on the same axis are ordered exactly as their corresponding local values. Hence the family is trace-generated.
For $f=(x,y)\in[0,\infty)^2$, Theorem~\ref{thm:fibered-global-representation}
gives
\begin{equation*}
\underline\Phi(x,y)=\max\{x,y\},
\end{equation*}
and
\begin{equation*}
\overline\Phi(x,y)=
\begin{cases}
x,&y=0,\\
y,&x=0,\\
\infty,&x>0\text{ and }y>0.
\end{cases}
\end{equation*}
Thus, the global representation is highly nonunique: both the
least generator
$\underline\Phi(x,y)=\max\{x,y\}$ and the monotone functional $\Phi(x,y)=x+y
$ generate the same local family.
The value $\overline\Phi(x,y)=\infty$ for $x,y>0$ reflects the fact that
no profile in the trace domain dominates a profile whose positive set meets both incomparable contexts. Consequently, the extremal generators describe not only the nonuniqueness of the global representation, but also the information that is absent because no admissible context contains both coordinates.
\end{example}

\section{Extension Theory for Partial Capacities}
\label{sec:V}

We now specialize to finite $X$ and take $\Sigma=2^X$. A \textit{normalized capacity} on $X$ is a monotone map
$\mu\colon2^X\to[0,1]$ satisfying $\mu(\emptyset)=0$ and $\mu(X)=1$, see~\cite{GrabischBook,WangKlir2009}. Local capacity data generally specify such a map only on part of the Boolean lattice.

\begin{definition}
Let $\mathcal D\subseteq2^X$ contain $\emptyset$ and $X$. A map
$\nu\colon\mathcal D\to[0,1]$ is a \emph{partial capacity} if
$\nu(\emptyset)=0$, $\nu(X)=1$, and
$$
A,B\in\mathcal D,\ A\subseteq B
\quad\Longrightarrow\quad
\nu(A)\le\nu(B).
$$
\end{definition}

\begin{theorem}[Extremal extension of a partial capacity]
\label{thm:partial-capacity-extension}
Every partial capacity $\nu\colon\mathcal D\to[0,1]$ on a finite set $X$ admits a normalized capacity extension to $2^X$. The least and greatest extensions are
\begin{align}
\nu_*(A)&:=\max\{\nu(B):B\in\mathcal D,\ B\subseteq A\},
\label{eq:partial-lower-extension}\\
\nu^*(A)&:=\min\{\nu(C):C\in\mathcal D,\ A\subseteq C\},
\label{eq:partial-upper-extension}
\end{align}
for $A\subseteq X$. Every normalized capacity $\mu$ extending $\nu$ satisfies
\begin{equation}
\label{eq:partial-extension-order}
\nu_*(A)\le\mu(A)\le\nu^*(A),
\qquad A\subseteq X.
\end{equation}
Consequently, the extension is unique if and only if $\nu_*=\nu^*$.
\end{theorem}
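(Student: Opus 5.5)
The plan is to verify directly that the two formulas \eqref{eq:partial-lower-extension} and \eqref{eq:partial-upper-extension} define normalized capacities extending $\nu$, and then to prove the sandwich \eqref{eq:partial-extension-order}. This mirrors the proof of Theorem~\ref{thm:fibered-global-representation}, where the extremal generators were obtained as a supremum over elements below and an infimum over elements above. The difference is that here the index sets are never empty. Indeed, $\emptyset\in\mathcal D$ lies below every $A$, and $X\in\mathcal D$ lies above every $A$. Since $X$ is finite, the maximum and minimum are attained, and no convention such as $\inf\emptyset=\infty$ is needed. Hence $\nu_*$ and $\nu^*$ take values in $[0,1]$.

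First I will check monotonicity. If $A\subseteq A'$, every $B\in\mathcal D$ with $B\subseteq A$ also satisfies $B\subseteq A'$, so $\nu_*(A)\le\nu_*(A')$. Dually, every $C\in\mathcal D$ with $A'\subseteq C$ also satisfies $A\subseteq C$, so $\nu^*(A)\le\nu^*(A')$.

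Next I will check that both maps extend $\nu$. For $A\in\mathcal D$, the set $A$ itself belongs to both defining families. Monotonicity of $\nu$ on $\mathcal D$ gives $\nu(B)\le\nu(A)$ for every admissible $B\subseteq A$, and $\nu(A)\le\nu(C)$ for every admissible $C\supseteq A$. Therefore $\nu_*(A)=\nu(A)=\nu^*(A)$. Normalization then follows from $\emptyset,X\in\mathcal D$. In particular, $\nu_*$ is a normalized capacity extension, which settles existence.

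For the bounds, let $\mu$ be any normalized capacity with $\mu|_{\mathcal D}=\nu$. If $B\in\mathcal D$ and $B\subseteq A$, then $\nu(B)=\mu(B)\le\mu(A)$. Taking the maximum over such $B$ gives $\nu_*(A)\le\mu(A)$. Symmetrically, $\mu(A)\le\mu(C)=\nu(C)$ for every $C\in\mathcal D$ with $C\supseteq A$, which gives $\mu(A)\le\nu^*(A)$.

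This establishes \eqref{eq:partial-extension-order}. Because $\nu_*$ and $\nu^*$ are themselves extensions, they are the least and greatest extensions. For uniqueness: if $\nu_*=\nu^*$, the sandwich forces every extension to coincide with them. Conversely, if the extension is unique, the two extensions $\nu_*$ and $\nu^*$ must coincide.

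No step is a real obstacle. The only point requiring care is the extension property, specifically the equality $\nu_*=\nu$ and $\nu^*=\nu$ on $\mathcal D$. This is exactly where monotonicity of $\nu$ on $\mathcal D$ is used, and I will state that use explicitly.
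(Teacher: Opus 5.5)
Your proposal is correct and follows essentially the same route as the paper: nonemptiness of the defining families via $\emptyset,X\in\mathcal D$, monotonicity and the extension property of $\nu_*$ and $\nu^*$, and the sandwich obtained by comparing an arbitrary extension $\mu$ with $\nu(B)$ for $B\subseteq A$ and with $\nu(C)$ for $C\supseteq A$. The only cosmetic difference is that the paper records $\nu_*\le\nu^*$ directly from $\nu(B)\le\nu(C)$ whenever $B\subseteq A\subseteq C$. You do not need that step, since it follows from the sandwich, and you instead make explicit the use of monotonicity of $\nu$ in verifying $\nu_*=\nu=\nu^*$ on $\mathcal D$.
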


\begin{proof}
The defining families in~\eqref{eq:partial-lower-extension} and
\eqref{eq:partial-upper-extension} are nonempty because
$\emptyset,X\in\mathcal D$. If $B\subseteq A\subseteq C$ with
$B,C\in\mathcal D$, monotonicity of $\nu$ gives $\nu(B)\le\nu(C)$; hence
$\nu_*(A)\le\nu^*(A)$. Both functions are monotone in $A$, normalized at
$\emptyset$ and $X$, and agree with $\nu$ on $\mathcal D$. Thus, they are capacity extensions. If $\mu$ is any extension, then
$\nu(B)=\mu(B)\le\mu(A)$ for $B\subseteq A$ and
$\mu(A)\le\mu(C)=\nu(C)$ for $A\subseteq C$. Taking the maximum and minimum proves~\eqref{eq:partial-extension-order}.
\end{proof}

Let each $E\in\cE^0$ carry a normalized capacity
$\mu_E\colon2^E\to[0,1]$. The family is \emph{overlap compatible} if
\begin{equation*}
\mu_E(B)=\mu_F(B),
\qquad B\subseteq E\cap F,
\end{equation*}
for all $E,F\in\cE^0$.

\begin{corollary}[Gluing local capacities]
\label{cor:gluing-local-capacities}
A family $\{\mu_E:E\in\cE^0\}$ of local normalized capacities admits a global normalized capacity $\mu$ satisfying
$$
\mu(B)=\mu_E(B),
\qquad B\subseteq E,\ E\in\cE^0,
$$
if and only if it is overlap compatible. If this holds, put
$$
\mathcal D_0:=\bigcup_{E\in\cE^0}2^E,
\qquad
\mathcal D:=\mathcal D_0\cup\{X\},
$$
and define $\nu(B)$ as the common local value for $B\in\mathcal D_0$, with
$\nu(X)=1$ when $X\notin\mathcal D_0$. Then the least and greatest global completions are $\nu_*$ and $\nu^*$ from
Theorem~\ref{thm:partial-capacity-extension}.
\end{corollary}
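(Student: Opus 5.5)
The plan is to prove necessity directly and to obtain sufficiency and the extremal completions by reducing to Theorem~\ref{thm:partial-capacity-extension}.

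\emph{Necessity.} Suppose a global normalized capacity $\mu$ satisfies $\mu(B)=\mu_E(B)$ for every $B\subseteq E$ and $E\in\cE^0$. For $B\subseteq E\cap F$ we then have $\mu_E(B)=\mu(B)=\mu_F(B)$, which is exactly overlap compatibility.

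\emph{Sufficiency.} Assume overlap compatibility.

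\begin{itemize}
\item[(a)] \emph{$\nu$ is well defined on $\mathcal D_0$.} If $B\in 2^E\cap 2^F$, then $B\subseteq E\cap F$, so $\mu_E(B)=\mu_F(B)$. Hence $\nu(B)$ is well defined.
\item[(b)] \emph{$\nu$ is well defined at $X$.} If $X\in\mathcal D_0$, then $X\in 2^E$ for some $E\in\cE^0$, which forces $E=X$. Normalization of $\mu_X$ gives $\nu(X)=\mu_X(X)=1$, so the convention $\nu(X)=1$ when $X\notin\mathcal D_0$ is consistent. Likewise $\nu(\emptyset)=\mu_E(\emptyset)=0$ for every $E$, and $\emptyset\in\mathcal D_0$ because $\cE^0$ is nonempty (since $\cE$ is nonempty and we may assume $\cE^0\neq\emptyset$).
\item[(c)] \emph{$\nu$ is monotone on $\mathcal D$.} This is the main obstacle, because monotonicity must hold across different contexts. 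Take $A\subseteq B$ in $\mathcal D$.
  \begin{itemize}
  \item[$\bullet$] If $B=X$, then $\nu(A)\le 1=\nu(X)$, since all values lie in $[0,1]$.
  \item[$\bullet$] Otherwise $B\in 2^F$ for some $F\in\cE^0$. Then $A\subseteq B\subseteq F$, so $A\in 2^F$ as well. By well-definedness, $\nu(A)=\mu_F(A)$ and $\nu(B)=\mu_F(B)$, and monotonicity of $\mu_F$ gives $\nu(A)\le\nu(B)$.
  \end{itemize}
  The point is that $2^F$ is downward closed, so both sets can always be evaluated in the single context that contains the larger one.
\end{itemize}

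Hence $\nu$ is a partial capacity on $\mathcal D$. Theorem~\ref{thm:partial-capacity-extension} then yields a normalized capacity extension $\mu$, and any such extension satisfies $\mu(B)=\mu_E(B)$ for $B\subseteq E$.

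\emph{Extremal completions.} Global normalized capacities agreeing with all $\mu_E$ on $\mathcal D_0$ are exactly the normalized capacity extensions of $\nu$: on $\mathcal D_0$ this is the agreement condition itself, and at $X$ it is normalization. So by \eqref{eq:partial-extension-order} the least and greatest global completions are $\nu_*$ and $\nu^*$.
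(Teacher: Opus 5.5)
Your proof is correct and follows essentially the same route as the paper. Necessity comes from restricting the global capacity to common coalitions. For sufficiency, overlap compatibility makes $\nu$ well defined, and monotonicity follows by reading both values from a single context containing the larger set, with the case $B=X$ handled by $\nu\le 1$; Theorem~\ref{thm:partial-capacity-extension} then applies. Your additional checks make explicit details the paper leaves implicit: consistency of $\nu(X)=1$ when $X\in\mathcal D_0$, nonemptiness of $\cE^0$ so that $\emptyset\in\mathcal D$, and the identification of global completions with the normalized capacity extensions of $\nu$.
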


\begin{proof}
Necessity follows by restricting a global capacity to a common coalition. Conversely, overlap compatibility makes $\nu$ well defined. If
$A,B\in\mathcal D_0$ and $A\subseteq B$, choose $E\in\cE^0$ with
$B\subseteq E$. Then also $A\subseteq E$, and overlap compatibility allows both partial values to be read from $\mu_E$; hence
$\nu(A)\le\nu(B)$. The case $B=X$ follows from $\nu(A)\le1$. Thus, $\nu$ is a partial capacity, and Theorem~\ref{thm:partial-capacity-extension} applies.
\end{proof}

This result separates nested compatibility from genuine gluing. Agreement only along inclusions controls zero-extension consistency; agreement on all overlaps is exactly what is needed for one global interaction model.

\section{Representation-Preserving Integral Schemes}
\label{sec:VI}

The universal-integral framework was introduced to provide a common setting for prominent nonadditive integrals and to separate the underlying monotone measure from the algebraic operation used in aggregation~\cite{KMP2010}. It belongs to a broader program of extending and axiomatizing fuzzy integrals, including interval-valued, copula-based, decomposition, and Choquet-like constructions~\cite{Bustince2013,GrecoRindone2013,WangZhangShen2024}. For the present fibered theory, the decisive issue is not the choice of one particular integral formula, but whether an integral respects canonical zero extensions of profiles and compatible extensions of its monotone set function. We isolate precisely this naturality property; our axioms are therefore not proposed as a replacement for the standard definition of a universal integral.

Throughout this section, the ambient profile class is restricted to the unit cube $[0,1]^X$, with local fibers $[0,1]^E$. An \emph{integral scheme} $\mathfrak U$ assigns to every finite $E$, every monotone set function
$\mu\colon2^E\to[0,1]$ with $\mu(\emptyset)=0$, and every
$u\in[0,1]^E$ a value $\mathfrak U_\mu(u)\in[0,1]$.

\begin{definition}
\label{def:representation-preserving-scheme}
An integral scheme $\mathfrak U$ is called \emph{calibrated and representation preserving} if:
\begin{enumerate}
\item[(U1)] for every $u,v\in[0,1]^E$, $u\le v$ implies $\mathfrak U_\mu(u)\le\mathfrak U_\mu(v)$, and
$\mathfrak U_\mu(0_E)=0$;
\item[(U2)] $\mathfrak U_\mu(\mI{B})=\mu(B)$ for every $B\subseteq E$;
\item[(U3)] whenever $F\subseteq E$ and monotone set functions
$\lambda\colon2^F\to[0,1]$ and $\mu\colon2^E\to[0,1]$, both vanishing at the empty set, satisfy $\mu|_{2^F}=\lambda$, then
\begin{equation*}
\mathfrak U_\mu(Z_{F,E}u)=\mathfrak U_\lambda(u),
\qquad u\in[0,1]^F.
\end{equation*}
\end{enumerate}
\end{definition}

Property (U3) is an embedding invariance: extending a profile by zero and extending its monotone set function without changing any value on the original fiber leave the integral unchanged. Property (U2) guarantees that the set function can be recovered from the integral on indicators; hence compatibility of integral values cannot conceal incompatible local set functions.

\begin{definition}
A family $\{\mu_E:E\in\cE^0\}$ is said to be \emph{projectively compatible} if
\begin{equation*}
\mu_E(B)=\mu_F(B)
\end{equation*}
for every $B\subseteq F\subseteq E$ with $F,E\in\cE^0$.
\end{definition}

\begin{theorem}[Projective integral representation]
\label{thm:projective-integral-representation}
Let $\mathfrak U$ be a calibrated representation-preserving scheme and define
$$
G_E^{\mathfrak U}(u):=\mathfrak U_{\mu_E}(u),
\qquad u\in[0,1]^E.
$$
Then $\mathbf G^{\mathfrak U}=\{G_E^{\mathfrak U}:E\in\cE^0\}$ is an FCA on the unit-cube profile class, and it is consistent under zero extension if and only if
$\{\mu_E:E\in\cE^0\}$ is projectively compatible.
\end{theorem}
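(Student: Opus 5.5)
The proof splits into the FCA claim and the two directions of the equivalence; each reduces to one axiom of Definition~\ref{def:representation-preserving-scheme}.

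\emph{FCA claim.} For each $E\in\cE^0$, axiom (U1) says that $G_E^{\mathfrak U}=\mathfrak U_{\mu_E}$ is nondecreasing on $[0,1]^E$ and satisfies $G_E^{\mathfrak U}(0_E)=0$. By the converse part of Proposition~\ref{prop:restriction-principle}, read on the unit-cube profile class, each such functional generates a CAO with respect to $E$. Hence $\mathbf G^{\mathfrak U}$ is an FCA.

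\emph{Sufficiency of projective compatibility.} Fix $F\subseteq E$ in $\cE^0$ and $v\in[0,1]^F$. Projective compatibility says exactly that $\mu_E|_{2^F}=\mu_F$. Both $\mu_E$ and $\mu_F$ are monotone and vanish at $\emptyset$, so (U3) applies with $\lambda=\mu_F$ and $\mu=\mu_E$. It gives $G_E^{\mathfrak U}(Z_{F,E}v)=\mathfrak U_{\mu_E}(Z_{F,E}v)=\mathfrak U_{\mu_F}(v)=G_F^{\mathfrak U}(v)$, which is \eqref{eq:zec}.

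\emph{Necessity of projective compatibility.} Assume zero-extension consistency and take $B\subseteq F\subseteq E$ with $F,E\in\cE^0$. Test \eqref{eq:zec} on the indicator profile $v=\mI{B}\in[0,1]^F$. Its zero extension satisfies $Z_{F,E}\mI{B}=\mI{B}$ as a profile on $E$. Applying (U2) on both fibers then gives
\[
\mu_E(B)=\mathfrak U_{\mu_E}(\mI{B})=G_E^{\mathfrak U}(Z_{F,E}\mI{B})=G_F^{\mathfrak U}(\mI{B})=\mathfrak U_{\mu_F}(\mI{B})=\mu_F(B).
\]
This is projective compatibility.

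None of these steps is technically hard. The point needing care is notation: the same indicator $\mI{B}$ appears on two fibers, and its identification across them must go through the canonical map $Z_{F,E}$, as the paper's convention requires. The statement does not say whether the $\mu_E$ are normalized capacities or only monotone set functions vanishing at $\emptyset$; (U2) and (U3) need only the latter, so the proof covers both readings.
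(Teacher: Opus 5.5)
Your proof is correct and follows the paper's own argument step for step. (U1) gives the CAO conditions on each fiber, and (U3) with $\lambda=\mu_F$, $\mu=\mu_E$ gives sufficiency; for necessity, testing zero-extension consistency on $\mI{B}$ with $B\subseteq F\subseteq E$ and applying (U2) on both fibers yields $\mu_E(B)=\mu_F(B)$.
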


\begin{proof}
Property (U1) gives the CAO conditions on every fiber. If the set functions are projectively compatible and $F\subseteq E$, then
$\mu_E|_{2^F}=\mu_F$, so (U3) yields
$$
G_E^{\mathfrak U}(Z_{F,E}u)=G_F^{\mathfrak U}(u).
$$
Conversely, apply zero-extension consistency to $u=\mI{B}$ with
$B\subseteq F\subseteq E$. By (U2), we get
$$
\mu_E(B)=\mathfrak U_{\mu_E}(Z_{F,E}\mI{B})
=\mathfrak U_{\mu_F}(\mI{B})=\mu_F(B). \qedhere
$$ 
\end{proof}

\subsection{Choquet and Universal-Integral Realizations}
The discrete Choquet scheme~\cite{Choquet1954,GrabischBook} satisfies Definition~\ref{def:representation-preserving-scheme}, since
\begin{equation*}
\Ch_\mu(u)=\int_0^1\mu(\{u\ge t\})\,\md t
\end{equation*}
and every positive level set is unchanged by zero extension.

A second canonical class comes directly from universal-integral framework \cite{KMP2010}. Let
$\otimes\colon[0,1]^2\to[0,1]$ be a semicopula, i.e., a nondecreasing operation with
$1\otimes a=a\otimes1=a$. Define the smallest $\otimes$-based universal integral by
\begin{equation}
\label{eq:semicopula-universal-integral}
\textrm{I}_{\otimes,\mu}(u)
:=\sup_{t\in[0,1]}t\otimes\mu(\{u\ge t\}).
\end{equation}
This scheme satisfies (U1)--(U3). Indeed, monotonicity and the neutral-element property imply $0\otimes a=0$ for every $a\in[0,1]$. Profile monotonicity then follows from the nesting of level sets, and the zero profile has integral value zero. For $u=\mI{B}$, the term corresponding to $t=0$ is zero, while for $0<t\le1$ the level set is $B$ and
$t\otimes\mu(B)\le 1\otimes\mu(B)=\mu(B)$, with equality at $t=1$; hence indicator calibration holds. Finally, (U3) follows because the positive level sets of $Z_{F,E}u$ coincide with those of $u$, whereas the terms at $t=0$ vanish on both fibers. The choices $\otimes=\min$ and $\otimes=\cdot$ give the Sugeno and Shilkret integrals, respectively~\cite{Sugeno1974,Shilkret1971,KMP2010}.

\begin{corollary}
\label{cor:classical-integrals-projective}
For Choquet-generated FCAs and for smallest semicopula-based universal-integral FCAs, zero-extension consistency is equivalent to projective compatibility of the local monotone set functions. In particular, this holds for the Choquet, Sugeno, and Shilkret integrals.
\end{corollary}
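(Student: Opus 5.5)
The corollary follows directly from Theorem~\ref{thm:projective-integral-representation}. The only task is to check that each of the named schemes is calibrated and representation preserving in the sense of Definition~\ref{def:representation-preserving-scheme}. Once (U1)--(U3) hold for a scheme $\mathfrak U$, the theorem applies verbatim to the FCA $\{\mathfrak U_{\mu_E}:E\in\cE^0\}$. It then gives the equivalence between zero-extension consistency and projective compatibility of $\{\mu_E\}$.

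\emph{The Choquet scheme.} Write $\Ch_\mu(u)=\int_0^1\mu(\{u\ge t\})\md t$ on a finite fiber $E$.
\begin{itemize}
\item[(U1)] If $u\le v$, then $\{u\ge t\}\subseteq\{v\ge t\}$ for every $t$. Monotonicity of $\mu$ and of the integral gives $\Ch_\mu(u)\le\Ch_\mu(v)$. For $u=0_E$, the level sets with $t>0$ are empty, so $\Ch_\mu(0_E)=0$.
\item[(U2)] For $u=\mI{B}$, the level set equals $B$ for every $t\in(0,1]$. Hence $\Ch_\mu(\mI{B})=\mu(B)$.
\item[(U3)] Take $F\subseteq E$ with $\mu|_{2^F}=\lambda$, and $u\in[0,1]^F$. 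For each $t>0$, $\{Z_{F,E}u\ge t\}=\{u\ge t\}\subseteq F$, so $\mu(\{Z_{F,E}u\ge t\})=\lambda(\{u\ge t\})$. The level $t=0$ is a single point and does not affect the Lebesgue integral. Therefore $\Ch_\mu(Z_{F,E}u)=\Ch_\lambda(u)$.
\end{itemize}

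\emph{The smallest semicopula-based universal integral.} Consider $\mathrm I_{\otimes,\mu}$ from~\eqref{eq:semicopula-universal-integral}. First observe that $0\otimes a\le 0\otimes1=0$, so every term with $t=0$ vanishes.
\begin{itemize}
\item[(U1)] Profile monotonicity follows from the nesting of level sets together with monotonicity of $\mu$ and of $\otimes$. The zero profile gives value $0$, because its level sets are empty for $t>0$ and $\mu(\emptyset)=0$.
\item[(U2)] For $u=\mI{B}$, every term with $t\in(0,1]$ equals $t\otimes\mu(B)\le\mu(B)$, and equality holds at $t=1$.
\item[(U3)] As in the Choquet case, the positive level sets of $Z_{F,E}u$ and of $u$ coincide, and both are subsets of $F$, where $\mu$ and $\lambda$ agree. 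Thus the suprema over $t\in(0,1]$ coincide, and the $t=0$ terms are zero on both fibers.
\end{itemize}
The Sugeno and Shilkret integrals are the special cases $\otimes=\min$ and $\otimes=\cdot$. Both are semicopulas: they are nondecreasing and have $1$ as neutral element.

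The main point needing care is (U3). One must make sure that adjoining zero-valued elements can only change the $t=0$ level set, which becomes $E$ instead of $F$. That set has Lebesgue measure zero in the Choquet integral, and it is killed by $0\otimes a=0$ in the universal integral. With (U1)--(U3) established, Theorem~\ref{thm:projective-integral-representation} yields the corollary.
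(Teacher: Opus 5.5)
Your proposal is correct and follows the paper's route: you verify (U1)--(U3) for the Choquet scheme and for $\mathrm I_{\otimes,\mu}$, using $0\otimes a=0$, the fact that positive level sets are unchanged by zero extension, and equality at $t=1$ for indicators, and then invoke Theorem~\ref{thm:projective-integral-representation}. The one step you leave implicit is $t\otimes 0=0$ in your zero-profile argument; it follows exactly as $0\otimes a=0$ does, since $t\otimes0\le1\otimes0=0$.
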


If all local set functions are normalized, projective compatibility is restrictive: for $F\subseteq E$ it forces
$\mu_E(F)=\mu_F(F)=1$. Thus, contextwise normalization may preserve fiberwise idempotency while destroying zero-extension consistency.

\begin{theorem}[Global integral representation]
\label{thm:global-integral-representation}
Let $\mathfrak U$ be a calibrated representation-preserving scheme and let
$\{\mu_E:E\in\cE^0\}$ be local normalized capacities. There exists a global normalized capacity $\mu$ such that
\begin{equation}
\label{eq:global-integral-representation}
\mathfrak U_\mu(Z_{E,X}u)=\mathfrak U_{\mu_E}(u)
\end{equation}
for all $E\in\cE^0$ and $u\in[0,1]^E$ if and only if the local capacities are overlap compatible.
\end{theorem}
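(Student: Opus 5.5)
The plan is to prove the two directions separately, reducing everything to Corollary~\ref{cor:gluing-local-capacities} via the axioms (U2) and (U3). The key observation is that, for a global monotone set function $\mu$ on $2^X$, condition \eqref{eq:global-integral-representation} is equivalent to $\mu|_{2^E}=\mu_E$ for every $E\in\cE^0$. We therefore prove this equivalence first; the theorem then follows directly from the gluing corollary.

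\emph{Sufficiency.} Assume the local capacities are overlap compatible. Corollary~\ref{cor:gluing-local-capacities} yields a global normalized capacity $\mu$ with $\mu(B)=\mu_E(B)$ for all $B\subseteq E$, $E\in\cE^0$; for instance $\mu=\nu_*$ or $\mu=\nu^*$. Hence $\mu|_{2^E}=\mu_E$ for each $E$. Both $\mu$ and $\mu_E$ are monotone and vanish at $\emptyset$, so we may apply (U3) with $F=E$, ambient set $X$, $\lambda=\mu_E$, and the profile $u\in[0,1]^E$. This gives $\mathfrak U_\mu(Z_{E,X}u)=\mathfrak U_{\mu_E}(u)$, which is \eqref{eq:global-integral-representation}. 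Note that $X$ itself need not be admissible here, because (U3) is formulated for arbitrary finite $F\subseteq E$ and not only for contexts.

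\emph{Necessity.} Suppose a global normalized capacity $\mu$ satisfies \eqref{eq:global-integral-representation}. Take $E\in\cE^0$ and $B\subseteq E$, and evaluate at $u=\mI{B}$ on the fiber $E$. Since $Z_{E,X}\mI{B}$ is the ambient indicator of $B$, (U2) applied on $X$ and on $E$ gives
$$
\mu(B)=\mathfrak U_\mu(Z_{E,X}\mI{B})=\mathfrak U_{\mu_E}(\mI{B})=\mu_E(B).
$$
If $B\subseteq E\cap F$ with $E,F\in\cE^0$, the same identity on both fibers gives $\mu_E(B)=\mu(B)=\mu_F(B)$, which is overlap compatibility.

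No step is a serious obstacle; the argument is essentially bookkeeping. The one point requiring care is applying (U3) and (U2) with $X$ as the ambient finite set, since the scheme is defined for every finite set and not only for admissible contexts. We should also note that the indicator $\mI{B}$ on $E$ zero-extends to the indicator of $B$ on $X$. Together with Theorem~\ref{thm:partial-capacity-extension}, the argument shows that the representing capacities are exactly the normalized extensions of the glued partial capacity $\nu$. This prepares the sharp lower and upper scores obtained from $\nu_*$ and $\nu^*$ when $\mathfrak U$ is monotone in the capacity.
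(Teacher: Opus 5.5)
Your proof is correct and follows the paper's argument. Necessity comes from indicator calibration (U2) applied on $X$ and on each fiber, and sufficiency from gluing via Corollary~\ref{cor:gluing-local-capacities} followed by (U3), with the context as the subfiber and $X$ as the ambient finite set. You also note that $X$ need not be admissible, since the scheme is defined on every finite set; the paper leaves this step implicit.
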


\begin{proof}
If~\eqref{eq:global-integral-representation} holds, indicator calibration gives
$\mu_E(B)=\mu(B)=\mu_F(B)$ for every $B\subseteq E\cap F$. Conversely, Corollary~\ref{cor:gluing-local-capacities} provides a global capacity extending every $\mu_E$, and (U3) gives~\eqref{eq:global-integral-representation}.
\end{proof}

Thus, disagreement on a common coalition is an exact obstruction to every global representation based on a calibrated representation-preserving scheme; disagreement between nested contexts already obstructs zero-extension consistency.

We call $\mathfrak U$ \emph{capacity monotone} if
$\mu\le\lambda$ pointwise implies
$\mathfrak U_\mu(u)\le\mathfrak U_\lambda(u)$ for every profile $u$. The Choquet scheme and the semicopula-based schemes in~\eqref{eq:semicopula-universal-integral} have this property.

\begin{theorem}[Robust representation interval]
\label{thm:robust-representation-interval}
Assume that the local capacities are overlap compatible, and let
$\mathcal C_{\rm glob}$ be the set of their global normalized extensions. Let
$\mu_*$ and $\mu^*$ be the extremal completions from Theorem~\ref{thm:partial-capacity-extension}. If $\mathfrak U$ is capacity monotone, then for every $f\in[0,1]^X$,
\begin{equation*}
\mathfrak U_{\mu_*}(f)
=
\min_{\mu\in\mathcal C_{\rm glob}}\mathfrak U_\mu(f)
\le
\max_{\mu\in\mathcal C_{\rm glob}}\mathfrak U_\mu(f)
=
\mathfrak U_{\mu^*}(f).
\end{equation*}
Hence both endpoints are sharp, and the displayed interval is the smallest interval containing all globally represented scores compatible with the local capacity data.
\end{theorem}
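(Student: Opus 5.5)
The plan is to sandwich every admissible global capacity between the two extremal completions, and then transport the pointwise order through capacity monotonicity. First, I fix the partial capacity $\nu$ on $\mathcal D=\mathcal D_0\cup\{X\}$ exactly as in Corollary~\ref{cor:gluing-local-capacities}. Overlap compatibility makes $\nu$ well defined, and that corollary also shows $\mathcal C_{\rm glob}$ is nonempty. I take $\mu_*:=\nu_*$ and $\mu^*:=\nu^*$ from Theorem~\ref{thm:partial-capacity-extension}.

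The next step is to identify $\mathcal C_{\rm glob}$ with the set of normalized capacity extensions of $\nu$. A global normalized capacity $\mu$ extends every $\mu_E$ precisely when it agrees with $\nu$ on $\mathcal D_0$. Since it is normalized, it also satisfies $\mu(X)=1=\nu(X)$. So $\mathcal C_{\rm glob}$ is exactly the set of extensions of $\nu$. By Theorem~\ref{thm:partial-capacity-extension}, both $\mu_*$ and $\mu^*$ lie in $\mathcal C_{\rm glob}$, and every $\mu\in\mathcal C_{\rm glob}$ satisfies $\mu_*\le\mu\le\mu^*$ pointwise on $2^X$.

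Now I fix $f\in[0,1]^X$. Capacity monotonicity of $\mathfrak U$ turns the pointwise sandwich into
\[
\mathfrak U_{\mu_*}(f)\le\mathfrak U_\mu(f)\le\mathfrak U_{\mu^*}(f)
\]
for every $\mu\in\mathcal C_{\rm glob}$. Because $\mu_*$ and $\mu^*$ themselves belong to $\mathcal C_{\rm glob}$, the infimum and supremum over $\mathcal C_{\rm glob}$ are attained and equal these endpoints. This justifies writing $\min$ and $\max$ in the statement, and it also shows that both endpoints are sharp.

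Finally, any interval containing all values $\mathfrak U_\mu(f)$ with $\mu\in\mathcal C_{\rm glob}$ must contain the attained minimum and maximum, so it contains $[\mathfrak U_{\mu_*}(f),\mathfrak U_{\mu^*}(f)]$. This gives minimality of the displayed interval.

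I expect the main obstacle to be the bookkeeping in the identification of $\mathcal C_{\rm glob}$ with the extensions of $\nu$, in particular the case $X\notin\mathcal D_0$, where $\nu(X)=1$ must be added by convention. The normalization of global extensions handles this directly. Everything else is a direct application of earlier results.
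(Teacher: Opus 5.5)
Your proposal is correct and follows the paper's proof: every global extension is sandwiched between $\mu_*$ and $\mu^*$ via Theorem~\ref{thm:partial-capacity-extension}, capacity monotonicity turns this into the score inequalities, and the endpoints are attained because $\mu_*,\mu^*\in\mathcal C_{\rm glob}$. Your explicit identification of $\mathcal C_{\rm glob}$ with the normalized extensions of the glued partial capacity $\nu$, including the convention $\nu(X)=1$, is a step the paper leaves implicit, and you handle it correctly.
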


\begin{proof}
Every $\mu\in\mathcal C_{\rm glob}$ satisfies
$\mu_*\le\mu\le\mu^*$ by
Theorem~\ref{thm:partial-capacity-extension}. Capacity monotonicity yields the two score inequalities. Since $\mu_*$ and $\mu^*$ themselves belong to
$\mathcal C_{\rm glob}$, the lower and upper bounds are attained.
\end{proof}

For the semicopula-based universal integral in
\eqref{eq:semicopula-universal-integral}, the sharp endpoints are obtained by
direct substitution:
\begin{equation*}
\sup_{t\in[0,1]}t\otimes\mu_*(\{f\ge t\})
\quad\text{and}\quad
\sup_{t\in[0,1]}t\otimes\mu^*(\{f\ge t\}).
\end{equation*}
Thus, the extension theorem produces computable robust bounds without selecting arbitrary values for unassessed coalitions.

\section{Representative Constructions}
\label{sec:VII}

The representation hierarchy gives a compact classification of familiar models. This viewpoint complements the usual operator-centered literature: recent fuzzy-system studies optimize or learn fuzzy measures, build Choquet-based decision procedures, and fuse heterogeneous fuzzy information~\cite{BeliakovWu2024,WangZhangShen2024,Qin2024,Gao2023}, whereas the results below classify the additional compatibility requirements created when the model domain itself varies. A fixed nondecreasing global functional $\Phi$ produces
$G_E(u)=\Phi(Z_{E,X}u)$ and therefore a globally representable FCA. This includes fixed-measure integrals, threshold-counting rules, and finite t-conorm accumulation. Normalized contextual means and context-dependent weights need not be globally representable. For
$$
G_E(u)=\sum_{x\in E}w_E(x)u(x),
$$
zero-extension consistency is equivalent to $w_E|_F=w_F$ whenever $F\subseteq E$. If every weight system is nonnegative and normalized, this forces the total weight assigned to $E\setminus F$ to be zero. Capacity-based families are governed instead by projective and overlap compatibility.

\begin{proposition}[Global representation of contextual weighted sums]
\label{prop:weighted-global-representation}
Assume that $X$ is finite and that
\begin{equation*}
G_E(u)=\sum_{x\in E}w_E(x)u(x),
\qquad w_E(x)\ge0.
\end{equation*}
The following assertions are equivalent:
\begin{enumerate}
\item[(i)] the FCA is trace-generated by a nonnegative linear functional on
$[0,\infty)^X$;
\item[(ii)] the FCA is trace-generated by some nondecreasing global functional;
\item[(iii)] the local weights agree on every overlap,
\begin{equation}
 w_E(x)=w_F(x),
 \qquad x\in E\cap F,
 \label{eq:weight-overlap-compatibility}
\end{equation}
for all $E,F\in\cE^0$.
\end{enumerate}
If these conditions hold, one may choose
$\Phi_w(f)=\sum_{x\in X}w(x)f(x)$, where $w(x)$ is the common local weight on
contexts containing $x$ and may be set to zero when no admissible context
contains $x$.
\end{proposition}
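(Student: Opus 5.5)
We prove the cycle (i)$\Rightarrow$(ii)$\Rightarrow$(iii)$\Rightarrow$(i). Throughout, the profile class is $[0,\infty)^X$ with $X$ finite, so every profile is bounded and measurable.

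The implication (i)$\Rightarrow$(ii) is immediate. A nonnegative linear functional $\Phi(f)=\sum_x c(x)f(x)$ with $c\ge0$ is nondecreasing and satisfies $\Phi(0_X)=0$.

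For (ii)$\Rightarrow$(iii), we use Theorem~\ref{thm:fibered-global-representation}: trace generation by a nondecreasing functional is equivalent to global order compatibility, and in particular to well-definedness of $\varphi$ on $\mathscr T_{\cE}$. Fix $E,F\in\cE^0$ and $x\in E\cap F$. Consider the indicator profiles $\mI{\{x\}}|_E\in\bF_E$ and $\mI{\{x\}}|_F\in\bF_F$. Their zero extensions to $X$ both equal $\mI{\{x\}}$. Well-definedness of $\varphi$ then gives
$$w_E(x)=G_E(\mI{\{x\}}|_E)=\varphi(\mI{\{x\}})=G_F(\mI{\{x\}}|_F)=w_F(x),$$
which is \eqref{eq:weight-overlap-compatibility}. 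Only equality on common zero-extended profiles is needed here, not the full order condition.

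For (iii)$\Rightarrow$(i), define $w(x)$ as the common value $w_E(x)$ over contexts $E\ni x$; overlap compatibility makes this independent of $E$. Set $w(x)=0$ if no admissible context contains $x$. Put $\Phi_w(f)=\sum_{x\in X}w(x)f(x)$, which is linear and nonnegative because every $w_E\ge0$. For $u\in\bF_E$, the zero extension $Z_{E,X}u$ vanishes off $E$, so
$$\Phi_w(Z_{E,X}u)=\sum_{x\in E}w(x)u(x)=\sum_{x\in E}w_E(x)u(x)=G_E(u).$$
Hence $\Phi_w$ trace-generates the FCA. This also gives the final assertion about the admissible choice of $\Phi_w$.

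No step is a real obstacle. The only point needing care is in (ii)$\Rightarrow$(iii): the singleton indicator must be taken on each fiber separately and identified only through $Z_{\cdot,X}$, as the paper's convention requires. The values of $w$ outside $\bigcup\cE^0$ are arbitrary nonnegative numbers, and zero is simply one convenient choice.
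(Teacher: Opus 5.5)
Your proof is correct and follows the paper's argument essentially verbatim: you use the same cycle, the same comparison of singleton indicators through their common ambient zero extension for (ii)$\Rightarrow$(iii), and the same glued weight $w$ for (iii)$\Rightarrow$(i). The detour through Theorem~\ref{thm:fibered-global-representation} is harmless but unnecessary, since trace generation by $\Phi$ already gives $G_E(\mI{\{x\}}|_E)=\Phi(\mI{\{x\}})=G_F(\mI{\{x\}}|_F)$ directly.
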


\begin{proof}
The implication (i)$\Rightarrow$(ii) is immediate. If (ii) holds and
$x\in E\cap F$, the singleton profiles on $E$ and $F$ have the same ambient zero extension. Hence
$w_E(x)=G_E(\mI{\{x\}})=G_F(\mI{\{x\}})=w_F(x)$, proving (iii).
Conversely, under~\eqref{eq:weight-overlap-compatibility} the global weight $w$ is well defined, nonnegative, and satisfies
$\Phi_w(Z_{E,X}u)=G_E(u)$ for every $E$ and $u$.
\end{proof}

This proposition is a concrete instance of the global representation theorem. For contextual weighted sums, compatibility along nested contexts characterizes
zero-extension consistency, whereas compatibility on arbitrary overlaps
characterizes the existence of a single global weighting model.

Two further standard classes included in the comparison below are finite
iterates of t-norms and t-conorms. Recall that a t-norm $T$ is a commutative and associative semicopula. A t-conorm $S$ is, dually, a commutative, associative, and nondecreasing operation on $[0,1]$ with neutral element $0$; see~\cite{KMPbook}. Associativity and commutativity make the finite iterates of t-norms and
t-conorms independent of the enumeration of a finite context.
Since $T(a,b)\le a$, a family generated by finite t-norm iterates is
set-nonincreasing on the unit-cube profile class. It generally fails
zero-extension consistency because $T(a,0)=0,$ so adjoining a zero-valued coordinate may annihilate the aggregate. By
contrast, the identity $S(a,0)=a$ implies that finite t-conorm iterates are zero-extension consistent and
set-nondecreasing. If $X$ is finite, the resulting t-conorm family is
trace-generated by the corresponding iterate over the entire universe. In
particular, the probabilistic sum
$S(a,b)=a+b-ab$ gives the evidence-fusion rule considered in Section~\ref{sec:VIII}. 

Table~\ref{tab:classification} summarizes the fiberwise, cross-context, and global representation properties of these constructions, together with the other representative families discussed above.

\begin{table}[t]
\centering
\caption{Typical behavior of representative FCA constructions.
ZEC denotes zero-extension consistency. The t-norm and t-conorm rows refer to the unit-cube profile class.}
\label{tab:classification}
\resizebox{\textwidth}{!}{%
\begin{tabular}{lcccc}
\toprule
Construction & Fiberwise idempotent & ZEC & Set behavior & Global representation \\
\midrule
$\sup_Eu$ & yes & yes & nondecreasing & yes \\
Fixed-measure integral & generally no & yes & nondecreasing & yes \\
Normalized contextual mean & yes & generally no & neither, generally & generally no \\
Context-dependent weighted sum & if normalized & iff nested-compatible & depends & iff overlap-compatible \\
Finite iterated t-norm & generally no & generally no & nonincreasing & generally no \\
Finite iterated t-conorm & generally no & yes & nondecreasing & yes \\
Choquet/Sugeno/Shilkret with local capacities & yes & iff projective & depends & iff overlap-compatible \\
Evidence rule~\eqref{eq:evidence-family-app} & generally no & yes & nondecreasing & yes \\
\bottomrule
\end{tabular}%
}
\end{table}

\section{Applications}
\label{sec:VIII}

We illustrate the framework through two applications for different purposes. Nonlinear fuzzy aggregation is routinely employed for information fusion, fault diagnosis, and multi-criteria ranking~\cite{Gao2023,WangZhangShen2024,Qin2024}. The first application explains the operational meaning of zero-extension consistency and the distinction between a trace-generated evidence rule and context-dependent interaction models. The second illustrates the extension and robust-representation theorems when several overlapping local capacity models are available but cross-context interactions are unspecified. The examples are deliberately transparent: they isolate the effect of a changing active set, which is obscured when it is combined immediately with learning, optimization, or imputation procedures.

\subsection{Context-Aware Fuzzy Evidence Fusion}
Let $X=\{T,V,P,A,L\}$
represent temperature, vibration, pressure, acoustic emission, and load. A fuzzy profile $u\colon X\to[0,1]$ measures the evidence supplied by each source. Possible active contexts include
\begin{align*}
E_{\rm steady}&=\{T,V,P,L\}, &
E_{\rm bearing}&=\{V,A,L\},\\
E_{\rm thermal}&=\{T,P,L\}.&&
\end{align*}
The main engineering requirements and their FCA formulations are summarized in Table~\ref{tab:requirements-properties}.

\begin{table}[t]
\centering
\caption{Operational requirements and corresponding FCA properties.}
\label{tab:requirements-properties}
\begin{tabular}{p{0.53\columnwidth}p{0.37\columnwidth}}
\toprule
Requirement & FCA property \\
\midrule
Larger evidence cannot lower the output & profile monotonicity \\
Adding positive evidence cannot lower alarm & set-nondecreasing behavior \\
A zero-evidence source has no effect & zero-extension consistency \\
Relabeling structurally equivalent sources has no effect & equivariance \\
Operating mode changes interactions & local capacities $\mu_E$ \\
\bottomrule
\end{tabular}
\end{table}

Let $\alpha_x\in[0,1]$ be source reliabilities and define
\begin{equation}
\label{eq:evidence-family-app}
G_E^{\rm ev}(u)=1-\prod_{x\in E}(1-\alpha_xu(x)).
\end{equation}
It is generated by the corresponding global probabilistic-sum functional; hence it is zero-extension consistent and set-nondecreasing. It is also equivariant under every relabeling that preserves the context family and the reliability assignment $x\mapsto\alpha_x$. Take
$$
(\alpha_T,\alpha_V,\alpha_P,\alpha_A,\alpha_L)
=(0.7,0.9,0.8,0.6,0.5)
$$
and on $E=\{T,V,P,L\}$ let
$$
u(T)=0.4,\quad u(V)=0.7,\quad u(P)=0.2,\quad u(L)=0.5.
$$
Then
$G_E^{\rm ev}(u) = 1-(0.72)(0.37)(0.84)(0.75) = 0.832168.
$
If $A$ becomes active with value zero, the output remains $0.832168$. If
$u(A)=0.45$, it increases to $0.87748264$.

More generally, let $y\notin E$, $a\in[0,1]$, and put
$u_a=Z_{E,E\cup\{y\}}u+a\mI{\{y\}}$. Then the marginal contribution of the
new source is exactly
\begin{align*}
G_{E\cup\{y\}}^{\rm ev}(u_a)-G_E^{\rm ev}(u) 
=\alpha_y a\bigl(1-G_E^{\rm ev}(u)\bigr).
\end{align*}
The formula simultaneously shows zero-extension consistency at $a=0$,
nonnegative response to positive evidence, and a saturation effect: the marginal impact of a new source decreases as the already accumulated evidence approaches one. For comparison, consider the normalized weighted mean
$$M_E(u)=\frac{\sum_{x\in E}\alpha_xu(x)}{\sum_{x\in E}\alpha_x}.$$
Its values are displayed in Table~\ref{tab:evidence-comparison}. Adding a zero-valued source decreases the score because the denominator changes. This is a legitimate contextual renormalization for average-performance evaluation, but it is a dilution effect in evidence accumulation.

\begin{table}[t]
\centering
\caption{Effect of changing the active source set.}
\label{tab:evidence-comparison}
\begin{tabular}{lccc}
\toprule
Rule & $E$ & $E\cup\{A\}$, $u(A)=0$ & $u(A)=0.45$ \\
\midrule
$G^{\rm ev}$ & 0.8322 & 0.8322 & 0.8775 \\
$M$ & 0.4552 & 0.3771 & 0.4543 \\
\bottomrule
\end{tabular}
\end{table}

The same system may require genuinely context-dependent interactions. On
$E_{\rm bearing}=\{V,A,L\}$, consider the normalized capacity
\begin{align*}
\mu(\{V\})&=0.35, & \mu(\{A\})&=0.25, & \mu(\{L\})&=0.20,\\
\mu(\{V,A\})&=0.70, & \mu(\{V,L\})&=0.75, & \mu(\{A,L\})&=0.55.
\end{align*}
It is monotone and assigns the pair $\{V,L\}$ a superadditive value,
$0.75>0.35+0.20$. For
$u(V)=0.7$, $u(A)=0.45$, and $u(L)=0.5$, we get
\begin{align*}
\Ch_\mu(u)
&=0.45+0.05\,\mu(\{V,L\})+0.20\,\mu(\{V\})=0.5575.
\end{align*}
If another operating context assigns a different value to the same common coalition, overlap compatibility fails. Theorem~\ref{thm:global-integral-representation} then proves that no single global calibrated fuzzy-integral model can reproduce both local interaction rules. Thus, the failure of global representation has a direct modeling interpretation: the interaction itself changes with the operating mode.

\subsection{Robust Transfer of Local Multi-Criteria Models}
Let $X=\{q,c,r,s\}$ represent quality, cost efficiency, reliability, and sustainability. Two expert panels provide normalized local capacities on the overlapping contexts $E_1=\{q,r,s\}$, $E_2=\{c,r,s\}$. Since
$E_1\cap E_2=\{r,s\}$, overlap compatibility requires the two capacities to agree on every coalition contained in $\{r,s\}$, namely on $\emptyset$, $\{r\}$, $\{s\}$, $\{r,s\}$. These are the values in the first four rows of Table~\ref{tab}. The remaining rows contain context-specific assessments and need not agree, since the corresponding coalitions do not belong to both local
domains. A dash indicates that the coalition is not contained in the corresponding context. Within each context, the displayed values are monotone with respect to set inclusion; therefore, both $\mu_{E_1}$ and $\mu_{E_2}$ are normalized capacities.

\begin{table}[t]
\centering
\caption{Complete specification of the local capacities. }
\label{tab}
\begin{tabular}{c@{\qquad}cc}
\toprule
Coalition $B$ & $\mu_{E_1}(B)$ & $\mu_{E_2}(B)$ \\ \midrule
$\emptyset$ & $0$ & $0$ \\
$\{r\}$ & $0.25$ & $0.25$ \\
$\{s\}$ & $0.20$ & $0.20$ \\
$\{r,s\}$ & $0.50$ & $0.50$ \\
\midrule
$\{q\}$ & $0.35$ & -- \\
$\{q,r\}$ & $0.65$ & -- \\
$\{q,s\}$ & $0.55$ & -- \\
$\{c\}$ & -- & $0.30$ \\
$\{c,r\}$ & -- & $0.60$ \\
$\{c,s\}$ & -- & $0.50$ \\
\midrule
$E_1=\{q,r,s\}$ & $1$ & -- \\
$E_2=\{c,r,s\}$ & -- & $1$ \\
\bottomrule
\end{tabular}
\end{table}


By Corollary~\ref{cor:gluing-local-capacities}, global capacity extensions exist. Interactions involving both $q$ and $c$ remain unspecified. For the profile
\begin{align*}
f(q)&=0.8, & f(c)&=0.6, &
f(r)&=0.4, & f(s)&=0.7.
\end{align*}
the distinct nonempty upper level sets entering the Choquet sum are
$X$, $\{q,c,s\}$, $\{q,s\}$, and $\{q\}$. The extremal capacity extensions satisfy
$$\mu_*(\{q,c,s\})=0.55,
\qquad
\mu^*(\{q,c,s\})=1.
$$
Consequently, Theorem~\ref{thm:robust-representation-interval}, applied to the Choquet scheme, gives
\begin{align*}
\Ch_{\mu_*}(f)
&=0.4+0.2(0.55)+0.1(0.55)+0.1(0.35)=0.6,\\
\Ch_{\mu^*}(f)
&=0.4+0.2(1)+0.1(0.55)+0.1(0.35)=0.69.
\end{align*}
Hence every global Choquet score compatible with both panels belongs to the sharp interval $[0.60,0.69]$, and both endpoints are attainable. Its width has the transparent decomposition
$$
0.69-0.60=0.2\,(1-0.55)=0.09.
$$
Here $0.2$ is the length of the level interval on which the unassessed coalition
$\{q,c,s\}$ is active, while $[0.55,1]$ is its admissible capacity range. The
score interval is therefore structural rather than statistical: it quantifies
exactly how uncertainty in a missing cross-context interaction propagates to the
final aggregate.

\section{Conclusion}
We developed a representation theory for fibered conditional aggregation. The restriction principle identifies the local functionals, while global order compatibility characterizes exactly when those local functionals admit one monotone ambient generator. The extremal generator formulas and the representation hierarchy show that zero-extension consistency and global representability are distinct unless the full universe is an admissible context. For finite systems, the partial-capacity extension theorem gives least and greatest global capacities and reduces the gluing of local capacities to overlap compatibility. Calibrated representation-preserving integral schemes unify the Choquet integral and semicopula-based universal integrals, including the Sugeno and Shilkret cases: projective compatibility characterizes zero-extension consistency, overlap compatibility characterizes global integral representation, and extremal extensions yield sharp robust score intervals. The applications show that these results formalize robustness to inactive sources, operating-mode-dependent interaction, and uncertainty caused by missing cross-context information. Future work will address data-driven estimation of fibered systems, dynamic contexts, and extension results beyond finite capacity domains.
\section*{Acknowledgments}
This work was supported by the Slovak Research and Development Agency under contract No.~APVV-21-0468. The second author also acknowledges the support of the internal scientific grant vvgs-2026-3897.

\end{document}